\let\oldproofname=\proofname
\renewcommand{\proofname}{\upshape\bfseries{\oldproofname}}
\DeclareMathOperator{\linspan}{span}
\DeclareMathOperator{\supp}{supp}
\providecommand{\abs}[1]{\lvert#1\rvert}
\providecommand{\bigabs}[1]{\bigl\lvert#1\bigr\rvert}
\providecommand{\Bigabs}[1]{\Bigl\lvert#1\Bigr\rvert}
\providecommand{\biggabs}[1]{\biggl\lvert#1\biggr\rvert}
\providecommand{\norm}[1]{\lVert#1\rVert}
\providecommand{\Bignorm}[1]{\Bigl\lVert#1\Bigr\rVert}
\providecommand{\ceil}[1]{\lceil#1\rceil}
\newcommand{\be}{\begin{equation}}
\newcommand{\ee}{\end{equation}}
\newtheorem{theorem}{Theorem}[section]
\newtheorem{lemma}[theorem]{Lemma}
\newtheorem{cor}[theorem]{Corollary}
\theoremstyle{definition}
\newtheorem{remark}[theorem]{Remark}
\newcommand{\cN}{{\mathcal{N}}}
\newcommand{\cF}{{\mathcal{F}}}
\newcommand{\cH}{{\mathcal{H}}}
\newcommand{\cC}{\mathcal{C}}
\newcommand{\cI}{\mathcal{I}}
\newcommand{\e}{\varepsilon}
\newcommand{\wh}{\widehat}
\newcommand{\T}{{\mathbb{T}}}
\newcommand{\Chi}{\raise .3ex
\hbox{\large $\chi$}} 
\newcommand{\vp}{\varphi}
\newcommand{\R}{\mathbb{R}}
\newcommand{\E}{\mathbb{E}}
\newcommand{\N}{\mathbb{N}}
\newcommand{\Z}{\mathbb{Z}}
\newcommand{\divergence}{\operatorname{div}}
\newcommand\diam{\mathop{\rm diam}}
\newcommand{\iref}[1]{\eqref{#1}}
\newcommand{\bp}{b_{\rm p}}
\newcommand{\kp}{k_{\rm p}}
\newcommand{\kt}{k_{\rm t}}
\newcommand{\hatkt}{\hat k_{\rm t}}
\begin{document}

\title
{Representations of Gaussian random fields and approximation
of elliptic PDEs with lognormal coefficients
\thanks{%
Research supported by the European Research Council under grant ERC AdG BREAD.
}
}
\author{ 
Markus Bachmayr, Albert Cohen and Giovanni Migliorati
}

\maketitle
\date{}
\begin{abstract}
Approximation of elliptic PDEs with random diffusion coefficients
typically requires a representation of the diffusion field
in terms of a sequence $y=(y_j)_{j\geq 1}$ of scalar random variables.
One may then apply high-dimensional approximation
methods to the solution map $y\mapsto u(y)$.
Although Karhunen-Lo\`eve representations are commonly used, it was
recently shown, in the relevant case of lognormal diffusion fields, 
that they do not generally yield optimal approximation rates. Motivated
by these results, we construct wavelet-type representations of stationary Gaussian 
random fields defined on bounded domains. 
The size and localization properties of these wavelets are studied, and used to obtain 
polynomial approximation results
for the related elliptic PDE which outperform those achievable 
when using Karhunen-Lo\`eve representations.
Our construction is based on a periodic extension of the random field,
and the expansion on the domain is then obtained by simple restriction. This
makes the approach easily applicable even when the computational
domain of the PDE has a complicated geometry. In particular, we apply this construction
to the class of Gaussian processes
defined by the family of Mat\'ern covariances.
\end{abstract}

\section{Introduction}

\noindent 
Stochastic PDEs are commonly used to model uncertain physical phenomena.
One such model that is used, for instance, in groundwater modeling is
the diffusion equation
\be\label{lognormaldiffusion}
  -\divergence(a \nabla u ) = f \text{ in $D$}, \quad u|_{\partial D} = 0,
\ee
with lognormally distributed coefficient, that is, $a = \exp(b)$,
where $b$ is a centered Gaussian random field defined on the 
computational domain $D\subset \R^d$ (where typically $d=2$ or $d=3$). 

Uncertainty quantification aims at describing 
the statistical properties of the resulting solution $u$, with various
computational objectives: evaluating the mean field $\bar u=\E(u)$,
estimating a plausible $a$ based on some measurement data of the solution, 
describing the law of a scalar quantity of interest $Q(u)$. 

For such objectives,  it is most convenient to represent $b$ in the form of an expansion
\be\label{bexpansion}
   b = \sum_{j \geq 1} y_j \psi_j,
\ee
where $y_j$ are i.i.d.\ $\cN(0,1)$, that is, independent scalar standard Gaussian random variables,
and $\psi_j$ are suitable functions on $D$.
Once such an expansion is given, one may introduce approximations to the solution map
\be
y \mapsto u(y), \quad y=(y_j)_{j\geq 1},
\ee
for example by multivariate polynomials in the variables $y_j$. Such approximations provide a fast
way to evaluate $u(y)$ for any choice of $y$ up to some prescribed accuracy, which is of crucial
help for the above mentioned tasks. Expansions of the form \iref{bexpansion}
can also be of practical use for the fast generation of trajectories, provided that
the $\psi_j$ have simple analytic expressions or are easy to compute numerically.

The Gaussian field $b$ is characterized by its covariance function
 \be
 (x,x')\mapsto K(x,x') = \E(b(x)\,b(x')), \quad x,x'\in D.
 \ee 
 A standard way of obtaining a representation \eqref{bexpansion} 
 is by using the Karhunen-Lo\`eve (KL) basis, that is, the 
 $L^2(D)$-orthonormal eigenfunctions $(\vp_j)_{j\geq 1}$ of the integral operator 
 \be
T: v \mapsto Tv= \int_D K(\cdot ,z)\,v(z)\,dz,
\ee
with corresponding eigenvalues $\lambda_j\geq 0$ arranged in decreasing order.
One then obtains \eqref{bexpansion} by setting
\be
\psi_j:= \sqrt{\lambda_j} \vp_j.
\ee
 The distinguishing feature of this particular choice is that in addition to
 the statistical orthogonality $\E(y_jy_k)=\delta_{j,k}$, the functions $\psi_j$ are orthogonal in $L^2(D)$. 
 However, other expansions of the form \eqref{bexpansion} may also be considered
 if one does not impose the $L^2(D)$-orthogonality. 
 
 As shown in \cite{LP}, general expansions
 of the form \eqref{bexpansion} with i.i.d.\ $\cN(0,1)$ coefficients 
are characterized by the fact that the $\psi_j$ form a \emph{tight frame} of the reproducing 
 kernel Hilbert space (RKHS) $\cH$ induced by $K$. Recall that $\cH$ is the 
 the closure of the finite linear combinations of the functions $K_z:=K(\cdot,z)$
 for $z\in D$, with respect to the norm induced by the inner product $\langle K_x,K_z\rangle_{\cH}:=K(x,z)$,
 see \cite{A} for a general treatment.
 Recall also that a tight frame of a Hilbert space $\cH$ is a complete system that satisfies the identity
 \be
 \sum_{j\geq 1} |\langle g,\psi_j\rangle_\cH |^2=\|g\|_{\cH}^2, \quad g\in \cH.
 \ee
We refer to \cite{Dau} for classical examples
of time-frequency or time-scale frames. In contrast to orthonormal bases, 
such systems may be redundant. The possible redundancy 
in \iref{bexpansion} can be illustrated by following trivial example: if $y_1$ and $y_2$ 
are i.i.d. $\cN(0,1)$ and $\psi$ is a given function, then
$y_1\psi+y_2\psi = z (\sqrt 2 \psi)$ with $z=(y_1+y_2)/\sqrt 2$ also $\cN(0,1)$.

 As an elementary yet useful example of the different possibilities for
expanding a Gaussian process, consider the Brownian bridge on $D=[0,1]$ whose covariance
 is given by $K(x,x')=\min\{x,x'\}-xx'$. On the one hand, the representation based on the KL expansion
is given by the trigonometric functions
\be
\psi_j(x):=\frac {\sqrt 2}{\pi j}\sin(\pi j x), \quad j\geq 1.
\label{KLBB}
\ee
On the other hand, another classical representation of this process is given by the 
Schauder basis, which consists of the hat functions
\be
\psi_j(x):=2^{-\ell/2}\sigma(2^\ell x-k), \quad \ell\geq 0, \quad k=0,\dots,2^\ell-1, \quad j=2^{\ell}+k,
\label{Schauder}
\ee
where $\sigma(x)=(1-|2x-1|)_+$. Both systems are orthonormal bases (and thus tight frames)
of the RKHS, which in this case is $\cH=H^1_0(D)$ endowed with norm $\|v\|_\cH:=\|v'\|_{L^2(D)}$.

Lax-Milgram theory ensures that for each individual $y=(y_j)_{j\geq 1} \in U := \R^\N$
such that $\sum_{j\geq 1} y_j\psi_j$ converges in $L^\infty(D)$, the corresponding solution
$u(y)$ is well defined in $V:=H^1_0(D)$, with a-priori bound
\be
\|u(y)\|_{H^1_0}\leq C \exp\biggl(\Bignorm{\sum_{j\geq 1} y_j\psi_j}_{L^\infty}\biggr), \quad C:=\|f\|_{V'}.
\ee
Sufficient conditions have been established, either in terms of the covariance function $K$
or of the size properties of the $\psi_j$, which ensure that the solution map $y\mapsto u(y)$
belongs to $L^k(U,V,\gamma)$ for all $k<\infty$, see \cite{DS,Ch,G,HS,BCDM}. Here
$L^k(U,V,\gamma)$ is the usual Bochner space 
where $\gamma$ denotes the countable tensor product of the univariate standard Gaussian measure. 

In turn, approximation can be obtained in $L^2(U,V,\gamma)$, corresponding to mean-square convergence, by truncation of the 
tensor product Hermite expansion
\be
u(y) = \sum_{\nu \in \cF} u_\nu H_\nu(y) ,\qquad u_\nu=\int_U u(y)H_\nu(y)d\gamma(y) \in V, \quad H_\nu(y) := \prod_{j\geq 1} H_{\nu_j}(y_j).
\ee
Here we have denoted by $(H_n)_{n\geq 0}$ the sequence of univariate Hermite polynomials
with normalization in $L^2(\R, g(t) dt)$ where $g(t)=\frac {1}{\sqrt {2\pi}} e^{-t^2/2}$,
and by $\cF$ the set of finitely supported sequences $\nu=(\nu_j)_{j\geq 1}$ of non-negative integers.

The best error for a given number $n$ of terms retained in the above expansion
is attained by the so-called best $n$-term Hermite approximation $y\mapsto u_n(y)$
obtained by retaining the indices $\nu$ corresponding to the $n$ largest $\norm{u_\nu}_V$. 
By combining Parseval's identity with Stechkin's lemma \cite{De}, the resulting error can be quantified in terms 
of the $\ell^p$-summability of the sequence $(\norm{u_\nu}_V)_{\nu\in\cF}$ for $p<2$, namely
\be
\|u-u_n\|_{L^2(U,V,\gamma)}\leq Cn^{-s}, \quad s:=\frac 1 p-\frac 1 2, \quad C:=\|(\norm{u_\nu}_V)_{\nu\in\cF}\|_{\ell^p(\cF)}.
\ee
As shown in \cite{BCDM}, this extra summability may depend strongly on the particular representation \eqref{bexpansion} that is used, or in other words, on the choice of coordinates $(y_j)_{j\geq 1}$. Whereas previous results establishing summability properties of $(\norm{u_\nu}_V)_{\nu\in\cF}$ are based only on the summability of $(\norm{\psi_j}_{L^\infty(D)})_{j\geq 1}$, that is, on the absolute sizes of the $\psi_j$, the results in \cite{BCM,BCDM} also take into account the localization properties of $\psi_j$. More specifically, the following is shown in \cite{BCDM}.

\begin{theorem}
\label{bcdmthm}
Let $0<p<2$ and let $q=q(p):=\frac {2p}{2-p}$. Assume that there exists a positive sequence $(\rho_j)_{j\geq 1}$ such that
\be
\sup_{x\in D}\sum_{j\geq 1} \rho_j|\psi_j(x)| <\infty
\label{firstcond}
\ee
and
\be
(\rho_j^{-1})_{j\geq 1} \in \ell^q(\N).
\label{secondcond}
\ee
Then the solution map $y\mapsto u(y)$ belongs to $L^k(U,V,\gamma)$ for all $0\leq k<\infty$. Moreover,
$(\|u_\nu\|_V)_{\nu\in \cF} \in \ell^p(\cF)$.  In particular, best $n$-term Hermite approximations
converge in $L^2(U,V,\gamma)$ with rate $n^{-s}$ for $s=\frac 1 p-\frac 1 2=\frac 1 q$.
\end{theorem}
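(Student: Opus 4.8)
\emph{Overall strategy.} I would split the statement into the integrability claim $u\in L^k(U,V,\gamma)$ and the summability claim $(\norm{u_\nu}_V)_{\nu\in\cF}\in\ell^p(\cF)$, treating the latter through a weighted $\ell^2$ bound on the coefficients followed by a H\"older argument. For the integrability, I would start from the a priori bound $\norm{u(y)}_V\le C\exp(\norm{b(y)}_{L^\infty})$ and reduce everything to exponential integrability of $N(y):=\bignorm{\sum_j y_j\psi_j}_{L^\infty}$. Hypothesis \iref{firstcond} forces $|\psi_j(x)|\le M\rho_j^{-1}$ with $M:=\sup_x\sum_j\rho_j|\psi_j(x)|$, whence the pointwise variance satisfies $\sum_j\psi_j(x)^2\le\sum_j\rho_j^2\psi_j(x)^2\le\bigl(\sum_j\rho_j|\psi_j(x)|\bigr)^2\le M^2$ uniformly in $x$ (using $\rho_j\ge1$ for all but the finitely many indices that contribute a bounded remainder). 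Since \iref{firstcond} also makes the series an a.s.\ finite element of $L^\infty(D)$, $N$ is an a.s.\ finite supremum of a Gaussian process, so Fernique's theorem gives $\E\exp(\alpha N^2)<\infty$ for some $\alpha>0$, hence $\E\exp(kN)<\infty$ for every $k$, and therefore $u\in L^k(U,V,\gamma)$ for all finite $k$.

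\emph{Reduction of the summability.} For the $\ell^p$ bound I would apply H\"older with conjugate exponents $2/p$ and $2/(2-p)$, writing $\rho^\nu:=\prod_j\rho_j^{\nu_j}$ and
\[
\sum_\nu\norm{u_\nu}_V^p=\sum_\nu\bigl(\rho^{2\nu}\norm{u_\nu}_V^2\bigr)^{p/2}\rho^{-p\nu}\le\Bigl(\sum_\nu\rho^{2\nu}\norm{u_\nu}_V^2\Bigr)^{p/2}\Bigl(\sum_\nu\rho^{-q\nu}\Bigr)^{(2-p)/2}.
\]
The exponent $q=2p/(2-p)$ is exactly what turns $\rho^{-p\nu}$ raised to the power $2/(2-p)$ into $\rho^{-q\nu}$. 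After rescaling the $\rho_j$ so that $\inf_j\rho_j>1$ (which leaves both hypotheses intact), the second factor factorizes as $\prod_j(1-\rho_j^{-q})^{-1}$, finite precisely because $(\rho_j^{-1})\in\ell^q(\N)$ by \iref{secondcond}. Thus the whole statement reduces to the weighted estimate $\sum_\nu\rho^{2\nu}\norm{u_\nu}_V^2<\infty$.

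\emph{The weighted estimate (crux).} Here I would exploit holomorphy in the parameters. For a complex shift $z=(z_j)$ one has $a(y+z)=a(y)\exp\bigl(\sum_j z_j\psi_j\bigr)$, so with $\Phi:=\sup_x\sum_j|z_j||\psi_j(x)|$ the coercivity constant $\essinf_x\Re a(y+z)\ge e^{-\norm{b(y)}_{L^\infty}}e^{-\Phi}\cos\Phi$ stays positive whenever $\Phi<\pi/2$, and Lax--Milgram extends $u$ holomorphically with
\[
\norm{u(y+z)}_V\le\|f\|_{V'}\,\frac{\exp(\norm{b(y)}_{L^\infty})\,\exp(\Phi)}{\cos\Phi}.
\]
The decisive feature is that the size of the admissible polydisc is governed by the \emph{localized} quantity $\Phi=\sup_x\sum_j|z_j||\psi_j(x)|$ rather than by $\sum_j|z_j|\norm{\psi_j}_{L^\infty}$: taking $|z_j|$ proportional to $\rho_j$ keeps $\Phi$ bounded by a multiple of $M$ through \iref{firstcond}. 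Combining this extension with the Hermite--derivative identity $u_\nu=(\nu!)^{-1/2}\int_U\partial^\nu u\,d\gamma$ and Cauchy's formula on polydiscs of radius tuned to $\nu$, then integrating the $y$-dependent constant against $\gamma$ via the Fernique bound above, should yield coefficient bounds whose $\rho^{2\nu}$-weighted squares are summable; the $(\nu!)^{-1/2}$ normalization of the Hermite basis is what makes $\rho^{2\nu}$-weighted summability attainable despite $\rho_j\to\infty$.

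\emph{Main obstacle.} The hard part is this weighted estimate, and within it the reconciliation of two competing effects. To profit from localization one must measure the extension through the pointwise sum $\sup_x\sum_j\rho_j|\psi_j(x)|$; yet repeated differentiation produces factorial factors that a naive Cauchy estimate on a fixed polydisc cannot absorb against the $(\nu!)^{-1/2}$ gain. Resolving this requires letting the radii of extension depend on $\nu$, optimizing the trade-off between $\prod_j r_j^{-\nu_j}$ and $\exp\bigl(\sup_x\sum_j r_j|\psi_j(x)|\bigr)$, and then controlling the resulting multinomial combinatorics when summing over the infinitely many coordinates. I expect this optimization, together with justifying the interchange of summation, differentiation and Gaussian integration underlying the Hermite--derivative identity, to be the technically most delicate step, with the integrability established first being exactly what legitimizes these manipulations.
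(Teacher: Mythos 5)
First, a point of reference: the paper you were given does not prove Theorem \ref{bcdmthm} at all — it is quoted from \cite{BCDM} ("the following is shown in \cite{BCDM}"). So your proposal has to be measured against the proof in that reference, whose strategy is real-variable, not complex-variable.

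The genuine gap is your reduction step, not the technical difficulties you flag afterwards: reducing the theorem to the weighted estimate $\sum_{\nu\in\cF}\rho^{2\nu}\norm{u_\nu}_V^2<\infty$ is a reduction to a statement that is \emph{false in general}, so no optimization of Cauchy radii can complete the "crux". Indeed, \iref{secondcond} forces $\rho_j\to\infty$, so after your renormalization $\rho_j>1$ for every $j$. But if $\sum_n \rho_j^{2n}\norm{u_{n e_j}}_V^2<\infty$ for some $\rho_j>1$ (here $n e_j$ is the index with $n$ in position $j$ and zeros elsewhere), then the coefficients of the univariate Hermite expansion of $y_j\mapsto\int u\,d\gamma(y_{\ne j})$ decay geometrically, and since $\abs{H_n(z)}\leq C\,n^{1/4}e^{\sqrt{n}\abs{z}}$ on $\C$ (Cauchy estimates on the generating function $e^{tz-t^2/2}$), geometric decay forces this map to extend to an \emph{entire} function of $y_j$. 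The solution map, however, is in general holomorphic only on the tube $\sup_{x}\sum_j\abs{\mathrm{Im}\,z_j}\,\abs{\psi_j(x)}<\pi/2$ and genuinely singular beyond it. Concretely: $d=1$, $D=(0,1)$, $\psi_1=\psi$ close to $+1$ on $(0,\tfrac12)$ and $-1$ on $(\tfrac12,1)$, $\psi_j=0$ for $j\geq 2$; the hypotheses hold with $\rho_1=2$, $\rho_j=j$. Solving the two-point boundary value problem explicitly, $u(z_1)$ contains the factor $\bigl(\int_0^1 e^{-z_1\psi(x)}\,dx\bigr)^{-1}=(\cosh z_1)^{-1}$, which has poles at $z_1=\pm i\pi/2$. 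Hence $u$ is not entire in $y_1$, its Hermite coefficients decay like $e^{-c\sqrt{n}}$ but not geometrically, and $\sum_n\rho_1^{2n}\norm{u_{ne_1}}_V^2=\infty$ for \emph{every} $\rho_1>1$ — while the conclusion of the theorem does hold. The factorial blow-up you describe in your "main obstacle" is therefore not an artifact of naive estimates; it reflects the fact that $u$ is strip-holomorphic rather than entire, and pure geometric weights $\rho^{2\nu}$ are incompatible with that.

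What \cite{BCDM} actually does is keep your H\"older skeleton but replace $\rho^{2\nu}$ by the "capped" weights
\be
b_\nu:=\sum_{\mu\leq\nu,\ \norm{\mu}_{\ell^\infty}\leq r}\binom{\nu}{\mu}\rho^{2\mu},
\ee
with $r$ a fixed integer satisfying $r>2/q$. These factor over coordinates and grow only polynomially in each $\nu_j$ (like $\nu_j^r\rho_j^{2r}$), so they are compatible with sub-geometric coefficient decay, and yet $\sum_\nu b_\nu^{-q/2}<\infty$ still follows from \iref{secondcond} (this is exactly where $r>2/q$ enters), so the H\"older step goes through verbatim. The weighted bound $\sum_\nu b_\nu\norm{u_\nu}_V^2<\infty$ is then proved without any complex analysis: from $\partial^\mu H_\nu=\sqrt{\nu!/(\nu-\mu)!}\,H_{\nu-\mu}$ one gets the identity $\sum_\nu b_\nu\norm{u_\nu}_V^2=\sum_{\norm{\mu}_{\ell^\infty}\leq r}\frac{\rho^{2\mu}}{\mu!}\norm{\partial^\mu u}^2_{L^2(U,V,\gamma)}$, which involves only derivatives of order at most $r$ per coordinate, and these are bounded by induction via energy estimates obtained by differentiating the variational formulation (using $\partial^\mu a=\psi^\mu a$), the required smallness of $\sup_x\sum_j\rho_j\abs{\psi_j(x)}$ being achieved by a harmless constant rescaling of $(\rho_j)$. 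Finally, a smaller slip in your integrability step: a.s.\ finiteness of $N(y)$ does \emph{not} follow from \iref{firstcond} alone (disjoint unit bumps with $\rho_j\equiv 1$ satisfy \iref{firstcond}, yet $N=\sup_j\abs{y_j}=\infty$ a.s.); one needs \iref{secondcond}, which gives $\sum_j e^{-t\rho_j^2}<\infty$ for all $t>0$ and hence $\sup_j\abs{y_j}/\rho_j<\infty$ a.s.\ by a union bound — after which either Fernique's theorem or the direct Gaussian tail bound (the route taken in \cite{BCDM}) yields all exponential moments.
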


The above result draws an important distinction between representations using 
globally or locally supported functions $\psi_j$. If nothing is assumed on the supports of $\psi_j$, 
we may only apply the following immediate consequence
of Theorem \ref{bcdmthm}.

\begin{cor}
\label{globcor}
If $(\psi_j)_{j\geq 1}$ is a family of functions with arbitrary support
such that $(\|\psi_j\|_{L^\infty})_{j\geq 1}$ belongs to $\ell^r(\N)$ for some $r<1$,
then best $n$-term Hermite approximations
converge in $L^2(U,V,\gamma)$ with rate $n^{-s}$ for $s=\frac 1 r-1$.
\end{cor}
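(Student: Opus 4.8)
The plan is to deduce this directly from Theorem \ref{bcdmthm} by exhibiting an admissible weight sequence $(\rho_j)_{j\geq 1}$ and identifying the exponent $p$ for which $q(p)$ matches the prescribed rate. Write $a_j := \|\psi_j\|_{L^\infty}$, so that $(a_j)_{j\geq 1}\in\ell^r(\N)$ with $r<1$; any index with $a_j=0$ contributes nothing to \iref{bexpansion} and may be discarded, so I assume $a_j>0$ throughout. Because no structure is imposed on the supports, in principle all of the $\psi_j$ may be simultaneously nonzero at a given $x\in D$, and the only uniform control available on the left-hand side of \iref{firstcond} is the crude bound $\sum_{j\geq 1}\rho_j|\psi_j(x)|\leq \sum_{j\geq 1}\rho_j a_j$. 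Thus condition \iref{firstcond} will be secured as soon as $(\rho_j a_j)_{j\geq 1}\in\ell^1(\N)$, and the whole argument reduces to balancing this against the requirement \iref{secondcond} that $(\rho_j^{-1})_{j\geq 1}\in\ell^q(\N)$.

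The key step is the choice of weights. I would set $\rho_j := a_j^{-(1-r)}$. Then $\rho_j a_j = a_j^{r}$, so \iref{firstcond} follows at once from $\sup_{x\in D}\sum_{j\geq 1}\rho_j|\psi_j(x)|\leq \sum_{j\geq 1} a_j^{r} = \|(a_j)\|_{\ell^r}^r<\infty$. On the other side, $\rho_j^{-1}=a_j^{1-r}$, and choosing $q:=\frac{r}{1-r}$ one gets $(\rho_j^{-1})^q = a_j^{(1-r)q} = a_j^{r}$, so that $(\rho_j^{-1})_{j\geq 1}\in\ell^q(\N)$ with the very same summability $\sum_j a_j^r$. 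In other words, both hypotheses of Theorem \ref{bcdmthm} collapse onto the single assumption $(a_j)\in\ell^r(\N)$.

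It remains to check that this value of $q$ is realized by an admissible exponent $p$. Inverting $q=\frac{2p}{2-p}$ gives $p=\frac{2q}{2+q}$, and substituting $q=\frac{r}{1-r}$ yields $p=\frac{2r}{2-r}$, which lies strictly between $0$ and $2$ for every $r\in(0,1)$; hence Theorem \ref{bcdmthm} applies with this $p$. Its conclusion then gives the best $n$-term Hermite approximation rate $n^{-s}$ with $s=\frac1q=\frac{1-r}{r}=\frac1r-1$, as claimed. I do not anticipate a genuine obstacle here: the content is entirely in the exponent bookkeeping of the previous paragraph, and in particular in recognizing that $\alpha=1-r$ is the choice $\rho_j=a_j^{-\alpha}$ that simultaneously keeps $(\rho_j a_j)$ summable (guaranteed by the hypothesis for $\alpha\leq 1-r$) and minimizes $q=r/\alpha$ (forcing $\alpha$ as large as possible), so that the two constraints meet exactly at the optimal rate.
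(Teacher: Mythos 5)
Your proof is correct and follows exactly the route the paper intends: Corollary \ref{globcor} is stated there as an immediate consequence of Theorem \ref{bcdmthm}, and the balancing choice $\rho_j = \|\psi_j\|_{L^\infty}^{-(1-r)}$ with $q = \frac{r}{1-r}$ (equivalently $p = \frac{2r}{2-r} \in (0,2)$) is precisely the instantiation that makes both \iref{firstcond} and \iref{secondcond} collapse onto $\sum_j \|\psi_j\|_{L^\infty}^r < \infty$ and yields $s = \frac1q = \frac1r - 1$. The exponent bookkeeping, the handling of indices with $\psi_j \equiv 0$, and the optimality remark about $\alpha = 1-r$ are all sound.
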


Consider for example the above mentioned 
Brownian bridge. On the one hand, the KL functions given by \iref{KLBB} are globally supported
with size of order $j^{-1}$. In turn, for any $q<\infty$, there exists no sequence $(\rho_j)_{j\geq 1}$
satisfying \iref{firstcond} and \iref{secondcond}, and therefore the best $n$-term truncation
cannot be proved to converge with any algebraic rate. On the other hand, the Schauder functions
given by \iref{Schauder} have local support properties which allow us to fulfill
 \iref{firstcond} with $\rho_j= j^{s}$ for any $s<\frac 1 2$. 
Hence best $n$-term Hermite truncation based on the Schauder representation
is ensured to converge with rate $n^{-s}$ for any $s<\frac 1 2$.

The same analysis applies to a general process on a bounded domain $D\subset \R^d$
if it admits an expansion \iref{bexpansion} where the $\psi_j$ have wavelet-like
localization properties. In this case, it is convenient to index the basis functions $\psi_j$
by a scale-space index $\lambda$, with $\abs{\lambda}$ denoting the corresponding scale level. 
We denote by $\cI$ the set of these indices, where 
$\#\{ \lambda \in \cI \colon \abs{\lambda} = \ell \} \sim  2^{d\ell}$ for $\ell \geq 0$.

\begin{cor}
\label{wavcor}
Let $(\psi_j)_{j\geq 1}= (\psi_\lambda)_{\lambda \in \cI}$ be a wavelet basis such that for some $\alpha >0$,  
\be
 \sup_{x\in D}  \sum_{\abs{\lambda} = \ell} \abs{\psi_\lambda(x)}   \leq   C 2^{-\alpha \ell} , \quad \ell \geq 0.
 \label{scaledec}
\ee
 If $(\|\psi_\lambda \|_{L^\infty})_{\lambda \in \cI}$ belongs to $\ell^q(\cI)$, which holds for $q>\frac{d}\alpha$,
then the  best $n$-term Hermite approximations
converge in $L^2(U,V,\gamma)$ with rate $n^{-s}$ for all $s< \frac1q < \frac{\alpha}d$.
\end{cor}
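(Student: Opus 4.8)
The plan is to deduce the corollary directly from Theorem \ref{bcdmthm} by exhibiting, for a suitable range of exponents, a weight sequence $(\rho_\lambda)_{\lambda\in\cI}$ of the geometric form $\rho_\lambda = 2^{\beta\abs{\lambda}}$ that simultaneously satisfies \eqref{firstcond} and \eqref{secondcond}. The two conditions pull $\beta$ in opposite directions, and the whole argument reduces to checking that the admissible window for $\beta$ is nonempty precisely when $q>d/\alpha$.

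First I would record the pointwise bound on the individual wavelets. Since for any fixed $x$ each term is dominated by the full level sum, \eqref{scaledec} gives $\norm{\psi_\lambda}_{L^\infty} = \sup_{x\in D}\abs{\psi_\lambda(x)} \leq \sup_{x\in D}\sum_{\abs{\mu}=\abs{\lambda}}\abs{\psi_\mu(x)} \leq C\,2^{-\alpha\abs{\lambda}}$. Combining this with the cardinality estimate $\#\{\lambda\in\cI : \abs{\lambda}=\ell\}\sim 2^{d\ell}$ and summing over levels yields
\be
\sum_{\lambda\in\cI}\norm{\psi_\lambda}_{L^\infty}^q \lesssim \sum_{\ell\geq 0} 2^{d\ell}\bigl(C\,2^{-\alpha\ell}\bigr)^q = C^q\sum_{\ell\geq 0} 2^{(d-\alpha q)\ell},
\ee
which is finite exactly when $d-\alpha q<0$, that is $q>d/\alpha$. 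This establishes the claimed $\ell^q(\cI)$ membership of $(\norm{\psi_\lambda}_{L^\infty})_{\lambda\in\cI}$.

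For the approximation rate I would fix any $q>d/\alpha$ and let $p=p(q)\in(0,2)$ be determined by inverting $q=\frac{2p}{2-p}$, namely $p=\frac{2q}{2+q}$, so that $\frac1p-\frac12=\frac1q$. I then choose an exponent $\beta$ with $\frac{d}{q}<\beta<\alpha$; such $\beta$ exists precisely because $q>d/\alpha$ forces $d/q<\alpha$. Setting $\rho_\lambda := 2^{\beta\abs{\lambda}}$, condition \eqref{firstcond} follows from \eqref{scaledec} by a single geometric summation,
\be
\sup_{x\in D}\sum_{\lambda\in\cI}\rho_\lambda\abs{\psi_\lambda(x)} = \sup_{x\in D}\sum_{\ell\geq0}2^{\beta\ell}\sum_{\abs{\lambda}=\ell}\abs{\psi_\lambda(x)} \leq C\sum_{\ell\geq0}2^{(\beta-\alpha)\ell}<\infty,
\ee
where convergence uses $\beta<\alpha$, while condition \eqref{secondcond} follows from the cardinality estimate
\be
\sum_{\lambda\in\cI}\rho_\lambda^{-q} \sim \sum_{\ell\geq0}2^{d\ell}2^{-\beta q\ell} = \sum_{\ell\geq0}2^{(d-\beta q)\ell}<\infty,
\ee
where convergence uses $\beta q>d$.

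With both hypotheses of Theorem \ref{bcdmthm} verified for this pair $(p,q)$, I would invoke it to conclude that $y\mapsto u(y)$ lies in every $L^k(U,V,\gamma)$ and that $(\norm{u_\nu}_V)_{\nu\in\cF}\in\ell^p(\cF)$, whence the best $n$-term Hermite approximation converges at rate $n^{-s}$ with $s=\frac1p-\frac12=\frac1q$. Since $q>d/\alpha$ was arbitrary, letting $1/q$ range over $(0,\alpha/d)$ delivers the stated rate $n^{-s}$ for every $s<\frac1q<\frac{\alpha}d$. There is no genuine obstacle beyond this bookkeeping; the only points requiring care are the simultaneous choice of $\beta$ in the window $(d/q,\alpha)$, which is exactly where the threshold $q>d/\alpha$ enters, and the verification that the $p$–$q$ conversion keeps $p$ in the admissible range $(0,2)$ so that Theorem \ref{bcdmthm} may be applied.
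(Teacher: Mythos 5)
Your proof is correct and follows exactly the route the paper intends: Corollary \ref{wavcor} is stated as an immediate consequence of Theorem \ref{bcdmthm}, and your choice of geometric weights $\rho_\lambda = 2^{\beta\abs{\lambda}}$ with $\beta \in (d/q,\alpha)$ is precisely the verification of \eqref{firstcond} and \eqref{secondcond} that the paper leaves implicit (mirroring its Schauder-basis discussion, where $\rho_j = j^{s}$ with $s<\tfrac12$ plays the same role). In fact your argument yields the slightly stronger endpoint rate $s = \tfrac1q$ for each fixed $q > \tfrac{d}{\alpha}$, which immediately implies the stated conclusion for all $s < \tfrac1q$.
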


This leads to the question whether suitable wavelet-type systems $(\psi_j)_{j\geq 1}$ forming orthonormal bases or tight 
frames of $\mathcal{H}$ can also be found for more general $b$ with given covariance kernel $K$. In view of the
above corollary, we are interested in the decay exponent $\alpha$ that can be ensured in \iref{scaledec}. 
For practical purposes, we also need these systems to either have exact analytic expressions or
be computable by simple and efficient numerical procedures.

We shall focus on \emph{stationary} random fields, that is, covariances of the form
\be
K(x,x') = k(x - x'), \quad x,x'\in D,
\ee
where $k$ is an even function defined over $\R^d$ which is the inverse Fourier transform of a positive measure.
One typical class of examples is given by the family of \emph{Mat\'ern covariances}
\be\label{matern}
    k(x) = \frac{2^{1-\nu}}{\Gamma(\nu)} \biggl( \frac{\sqrt{2\nu}\abs{x}}{\lambda}  \biggr)^\nu K_\nu\biggl( \frac{\sqrt{2\nu}\abs{x}}{\lambda} \biggr),
\ee
where $\nu,\lambda > 0$ and $K_\nu$ is the modified Bessel function of the second kind, with Fourier transform given by
\be\label{maternfourier}
  \hat k(\omega) = c_{\nu,\lambda}\, \biggl( \frac{2\nu}{\lambda^2} + \abs{\omega}^2  \biggr)^{-(\nu + d/2)}, \quad c_{\nu,\lambda} := \frac{ 2^d \pi^{d/2} \Gamma(\nu + d/2) (2\nu)^\nu}{ \Gamma(\nu) \lambda^{2\nu}}.
\ee
Here, for the Fourier transform, we use the convention
\be
    \hat f(\omega) = \int_{\R^d} f(x)\,e^{-i x\cdot \omega} \,dx.
\ee
The parameters $\nu$ and $\lambda$ quantify the smoothness and correlation length, respectively, of the process.

The idea of using wavelet-type systems for the representation of Gaussian processes was 
put forward in the pioneering works of Ciesielski \cite{Cie}, motivated by the problem of
analyzing the H\"older smoothness of univariate Gaussian processes. This program was 
pursued in \cite{CKR} where Sobolev-Besov smoothness was investigated, in particular
for the fractional Brownian motion. These papers were based on representing the processes
of interest in the Schauder basis, with resulting components that are generally not independent and
therefore not of the form \iref{bexpansion}.

A general approach was proposed in \cite{BJR} to obtain wavelet-type representations of Gaussian processes 
defined on $\R^d$, with independent components. This approach requires that the covariance
function is the Green's function of an operator of pseudo-differential type, and it includes 
Mat\'ern covariances, see also \cite{CI}. A related approach was proposed in \cite{EHM} for stationary Gaussian
processes, motivated by fast methods for generating trajectories. Both approaches 
strongly rely on the Fourier transform over $\R^d$, and do not carry over in a simple manner to the case of a bounded domain $D\subset \R^d$.

A general construction of wavelet expansions of the form \iref{bexpansion}
for Gaussian processes was recently proposed in \cite{KOPP}, in the general framework of
Dirichlet spaces. Here the needed assumptions are that the covariance operator commutes
with the operator that defines the Dirichlet structure. The latter does not have a simple
explicit form in the case of Mat\'ern covariances on a domain $D$, which makes this approach
difficult to analyze and implement in our setting. Let us also mention the construction in \cite{G2},
where an orthonormal basis for the RKHS is built by a direct Gram-Schmidt process, however,
with generally no size and localization bounds on the resulting basis functions.

In the present paper, we propose an approach where the complications that may arise due to the geometry of $D$ 
are circumvented by performing a periodic continuation of the random field $b$ on a larger torus $\T$.
The existence of such a continuation is discussed in \S 2. This leads to a simple construction
of KL-type and wavelet-type expansions, by restrictions to $D$ of similar expansions defined on $\T$.
While the systems introduced on $\T$ are bases, their restriction to $D$
are redundant frames of the RKHS, the amount of redundancy being essentially reflected by 
the ratio $|\T| / |D|$. Our construction thus
achieves numerical simplicity at the price of redundancy.

The periodic continuation has some parallels to \emph{circulant embedding}, proposed independently in \cite{DN} and \cite{WC} as an algebraic technique for evaluating a stationary Gaussian random field, given by $k$, at the points of a uniform grid. Here, the (block) Toeplitz matrix formed by the grid values of $k$ is embedded into a (block) circulant matrix, enabling a factorization by fast Fourier transform. Although it was shown in \cite{DMS} that a positive definite Toeplitz matrix can always be embedded into a sufficiently large positive definite circulant matrix, its required size depends on the particular $k$ under consideration, similarly to the size of $\T$ in our construction. Under more restrictive assumptions
on $k$, simple procedures produce an embedding into a matrix of size proportional to the original one \cite{DN}.
This strategy has also been applied in the numerical treatment of lognormal diffusion problems by quasi-Monte Carlo (QMC) methods \cite{GKNSSSjcp}. 

Also in the setting of sampling-based methods such as QMC, our results have several potentially advantageous features. 
They yield a periodically extended random field on all of $D$, rather than on a uniform spatial grid. Any further approximation (e.g.\ on unstructured finite element meshes) can thus be adjusted independently of the periodic extension and of a chosen expansion of the random field.
Furthermore, the decay properties of the KL eigenvalues of the periodic process are directly controlled by the decay of the Fourier transform of $k$.

In the case of KL-type expansions, which are studied in \S 3, the functions $\psi_j$ that we obtain are simply the restrictions to $D$ of 
trigonometric functions. This is an advantage in term of numerical simplicity compared to the 
$L^2(D)$-orthogonal KL functions of $b$ which may not be easy to compute accurately,
and in addition may not satisfy uniform $L^\infty$ bounds. Wavelet-type expansions are defined in \S 4, 
and we establish their localization and size properties. In the case of Mat\'ern covariances,
they correspond to the value $\alpha=\nu$ in \iref{scaledec}, where $\nu$ is the smoothness parameter
in \iref{maternfourier}. Therefore, corresponding best $n$-term Hermite approximations converge with algebraic rate $n^{-s}$
for any $0<s<\nu/d$. Finally, in \S 5 details are given on numerical procedures which may be applied 
to define the periodic continuation and to construct the resulting wavelets,
in the case of Mat\'ern covariance, depending on the parameters $\lambda,\nu$.

\section{Periodic continuation of a stationary process}

Let $(b(x))_{x\in\R^d}$ be a real valued, stationary and centered Gaussian process defined on $\R^d$, whose covariance is thus of the form
\be
\E(b(x)b(x'))=k(x-x'),
\ee
where $k$ is a real valued and even function which is the inverse Fourier transform of a non-negative measure $\hat k$. We work under 
the assumption that $\hat k$ is a function such that
\be
0\leq \hat k(\omega) \leq C(1+|\omega|^2)^{-r},\quad \omega\in\R^d,
\label{deck}
\ee
for some $r>d/2$ and $C>0$. Obviously, the Mat\'ern covariances \iref{matern} satisfy this assumption with $r=\nu+d/2$ and $C$ depending on 
$(d,\lambda,\nu)$. We consider the restricted process $(b(x))_{x\in D}$ defined on the bounded domain $D$ of interest.

We aim for representations of the general form \iref{bexpansion}
where the $y_j$ are i.i.d.\ $\cN(0,1)$ and the $(\psi_j)_{j\geq 1}$ are a given sequence of functions defined on $D$.
As explained in the introduction, one natural choice is $\psi_j=\sqrt{\lambda_j}\varphi_j$,
where $(\varphi_j,\lambda_j)$ are the eigenfunctions and eigenvalues of the covariance operator. However,
this renormalized KL representation may not meet our requirements
due to the possibly global support of the $\psi_j$ and due to the slow decay of their $L^\infty$ norms,
while other representations could be more appropriate.

Our strategy for deriving better representations of the process over $D$
is to view it as the restriction to $D$ of a periodic stationary Gaussian process $\bp$
defined on a suitable larger torus $\T$.  As a consequence, any representation
\be
\bp = \sum_{j \geq 1} y_j \tilde \psi_j,
\ee
with $y_j$ i.i.d.\ $\cN(0,1)$ and $(\tilde \psi_j)_{j\geq 1}$ a given system of functions, yields a representation
\be
b = \sum_{j \geq 1} y_j \psi_j, \quad \psi_j := \tilde \psi_j|_D.
\ee 
The construction of $\bp$ requires additional assumptions on the 
covariance function $k$.

Let $\delta := \diam(D)$, so that in a suitable coordinate system, 
$D$ can be embedded into the box $[-\frac\delta2, \frac\delta2]^d$. 
We want to construct a periodic process $(\bp(x))_{x\in \T}$ 
on a torus $\T = [-\gamma,\gamma]^d$ with $\gamma>\delta$, 
whose restriction on $D$ agrees with $b$,
that is, $\bp|_D \sim b$. This is feasible provided
that we can find an even and $\T$-periodic function $\kp$ which agrees 
with $k$ over $[-\delta,\delta]^d$ and such that the Fourier coefficients
\be
c_n(\kp):= \int_\T \kp(z)\,e^{-i\frac {\pi}{\gamma}n\cdot z}\,dz, \quad n\in\Z^d,
\label{fourierkp}
\ee
are positive. In addition we would like that these Fourier coefficients
have a similar rate of decay as the function $\hat k$, that is,
\be
0\leq c_n(\kp)\leq C(1+|n|^2)^{-r}, \quad n\in\Z^d,
\label{deckp}
\ee
for some $C>0$. Note that $\kp$ generally differs from the periodization
$\sum_{n\in\Z^d} k(\cdot+2\gamma n)$,
which corresponds to a periodic process that does not agree with $b$ on $D$.

One natural way of constructing the function $\kp$ is by 
truncation and periodization: first we choose a sufficiently smooth 
and even cutoff function $\phi\colon \R^d\to\R$,
to be specified further,
such that $\phi|_{[-\delta,\delta]^d} = 1$ and $\phi(x)=0$ for $x\notin [-\kappa,\kappa]^d$
where $\kappa:=2\gamma - \delta$, and define the truncation
\be
\kt(z):=k(z)\,\phi(z).
\ee
We now define $\kp$ as the periodization of $\kt$, that is,
\be
\label{periodizedkernel}
 \kp(z) = \sum_{n\in \Z^d} \kt(z+2\gamma n) .
\ee
Obviously, $\kp$ agrees 
with $k$ over $[-\delta,\delta]^d$, and 
\be
c_n(\kp)=\hatkt\left(\frac \pi \gamma n\right).
\label{cnkp}
\ee
Therefore \iref{deckp} follows if we can establish
\be
0\leq \hatkt(\omega) \leq C(1+|\omega|^2)^{-r},\quad \omega\in\R^d,
\label{deckt}
\ee
for some $C>0$. Since we have
\be
\hatkt=(2\pi)^{-d}\,  \hat k * \hat \phi,
\label{hatktconv}
\ee
it is easily seen that the upper inequality in \iref{deckt}
follows from the upper inequality in \iref{deck}, provided that $\phi$ is chosen sufficiently smooth
such that
\be
 |\hat \phi (\omega)| \leq C(1+|\omega|^2)^{-r},
\label{decphi}
\ee
for some $C>0$. Indeed, combining \iref{hatktconv} with \iref{deck} and \iref{decphi}, we obtain
\be
\begin{aligned}
(2\pi)^d |\hatkt (\omega)| &\leq  \left |\int_{|\xi| \leq |\omega|/2}\hat k(\xi)\, \hat \phi(\omega-\xi)\,d\xi\right |
+\left |\int_{|\xi| \geq |\omega|/2}\hat k(\xi)\, \hat \phi(\omega-\xi)\, d\xi\right |\\
& \leq \|\hat k\|_{L^1} \max_{|\xi|\geq |\omega|/2} |\hat \phi(\xi)| + \|\hat \phi\|_{L^1} \max_{|\xi|\geq |\omega|/2}|\hat k(\xi)|\\
& \leq C(1+|\omega|^2)^{-r}.
\end{aligned}
\label{ktdecayestimate}
\ee
The main problem is to guarantee the lower inequality in \iref{deckt}, that is, the non-negativity of $\hatkt$. 
Note that $\hat \phi$ cannot be non-negative: since $1=\phi(0)=(2\pi)^{-d}\int_{\R^d} \hat\phi(\omega) d\omega$,
the non-negativity of $\hat \phi$ would imply that 
\be
|\phi(x)|=(2\pi)^{-d}\left |\int_{\R^d} \hat\phi(\omega)\,e^{ix \cdot \omega}\, d\omega\right |<1, \quad x\neq 0,
\ee
therefore contradicting the assumption 
$\phi|_{[-\delta,\delta]^d} = 1$.

It follows that for any such $\phi$, the convolution operator
\be
v\mapsto v*\hat \phi,
\ee
does not preserve positivity for all functions $v$. Here, we are only interested in preserving positivity for the
particular function $\hat k$. However, the following result shows that this is in general not feasible only under
the assumption \iref{deck}.

\begin{theorem}
For any $r>d/2$, there exists an even function $k$ that satisfies \iref{deck} and such that for any 
$\phi$ satisfying $\abs{\hat\phi(\omega)} \leq C(1+\abs{\omega}^2)^{-s}$ for some $s>r$, $\phi|_{[-\delta,\delta]^d} = 1$, $\phi(x)=0$ for $x\notin [-\kappa,\kappa]^d$ 
for some $\kappa>\delta$,
the function $\hatkt=(2\pi)^{-d}\,\hat k * \hat \phi$ is not non-negative.
\end{theorem}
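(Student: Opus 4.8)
The plan is to prove the statement by an explicit \emph{adversarial} construction of $\hat k$: I will take $\hat k$ to be a sparse, lacunary superposition of narrow nonnegative bumps, chosen so that convolution with \emph{any} admissible $\hat\phi$ is forced to take a negative value at a suitable frequency. The guiding observation is the one already recorded before the statement, namely that $\hat\phi$ can never be nonnegative; hence $\hat\phi(\eta_0)<0$ for some $\eta_0$. If almost all the mass of $\hat k$ sits on a single, well-separated bump, then reading off the convolution at the frequency obtained by shifting that bump's centre by $\eta_0$ should expose this negative value—provided the remaining bumps only contribute lower-order terms. The gap hypothesis $s>r$ is exactly what will make those ``far'' bumps negligible.

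For the construction, fix a unit vector $e\in\R^d$, a nonnegative even bump $\beta\in C^\infty_c(\R^d)$ supported in $\{\abs{\xi}<w\}$, and set $\omega_m:=2^m e$. Define
\be
\hat k(\omega):=\sum_{m\geq 0} a_m\bigl(\beta(\omega-\omega_m)+\beta(\omega+\omega_m)\bigr),\qquad a_m:=c\,(1+2^{2m})^{-r},
\ee
with $c>0$ and $w>0$ small enough that $0\le \hat k(\omega)\le C(1+\abs{\omega}^2)^{-r}$ everywhere, so that \iref{deck} holds. Since $\hat k$ is nonnegative, even and integrable (as $r>0$), its inverse Fourier transform $k$ is a real, even, continuous function, as required. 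Write $c_m:=a_m\int\beta\sim 2^{-2rm}$ for the mass of the $m$-th bump.

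Now fix any admissible $\phi$. As $\phi$ is real, even and $\phi(0)=1$, its transform $\hat\phi$ is real, even, continuous and satisfies $(2\pi)^{-d}\int\hat\phi=1$; since $\hat\phi$ cannot be nonnegative, there exist $\eta_0$ and $\rho>0$ with $\hat\phi(\eta)\le\tfrac12\hat\phi(\eta_0)<0$ for $\abs{\eta-\eta_0}<\rho$. Shrinking $w<\rho$ if needed, we evaluate $\hat k*\hat\phi=(2\pi)^d\,\hatkt$ at $\omega^{(m)}:=\omega_m+\eta_0$. On the support of $\beta(\cdot-\omega_m)$ the argument $\omega^{(m)}-\xi$ lies within $\rho$ of $\eta_0$, so the $m$-th bump contributes
\be
\int a_m\,\beta(\xi-\omega_m)\,\hat\phi(\omega^{(m)}-\xi)\,d\xi\;\le\;\tfrac12\,c_m\,\hat\phi(\eta_0)\;<\;0.
\ee
For the remaining bumps, \iref{decphi} bounds the contribution of the bump centred at $\omega_{m'}$ (with $m'\neq m$) by $C\,c_{m'}(1+\abs{\omega_m-\omega_{m'}}^2)^{-s}$ in absolute value, and likewise bounds the reflected bump at $-\omega_m$ by $C\,c_m\,2^{-2sm}$. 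Lacunarity gives $\abs{\omega_m-\omega_{m'}}\ge 2^{m-1}$ for $m'<m$, so $\sum_{m'<m}c_{m'}\,2^{-2s(m-1)}=O(2^{-2sm})$, while the terms with $m'>m$ are smaller still. Since $s>r$, the entire remainder is $o(c_m)=o(2^{-2rm})$ and is dominated by the strictly negative main term. Hence $(\hat k*\hat\phi)(\omega^{(m)})<0$, so $\hatkt(\omega^{(m)})<0$ for all large $m$, contradicting nonnegativity of $\hatkt$; as $\phi$ was arbitrary, this proves the claim.

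The only delicate point is the domination estimate in the last paragraph: showing that the single negative bump outweighs the sum of all others, uniformly in the (arbitrary) cutoff $\phi$. Everything there hinges on combining the lacunary spacing of the frequencies $\omega_m$ with the strict gap $s>r$. The spacing ensures that neighbouring bumps are probed only through the rapidly decaying tail of $\hat\phi$, while the gap guarantees that the order-one low-frequency bumps, whose $\hat\phi$-weight at $\omega^{(m)}$ is of size $\sim 2^{-2sm}$, are of strictly smaller order than the target main term of size $\sim 2^{-2rm}$. The remaining verifications—evenness of $k$, the upper bound in \iref{deck}, and the existence of $\eta_0$—are routine.
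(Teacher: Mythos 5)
Your construction is essentially the paper's: a lacunary sum of nonnegative bumps centred at frequencies $\omega_m$ with $|\omega_m|=2^m$ and amplitudes $\sim 2^{-2rm}$, evaluation of $\hat k * \hat\phi$ at $\omega_m+\eta_0$ where $\hat\phi(\eta_0)<0$, and domination of all other bumps' contributions using the gap $s>r$ (your remainder estimate is correct and matches the paper's treatment of its term $I_2$). However, there is a genuine gap at the step ``shrinking $w<\rho$ if needed.'' The theorem requires the quantifier order: there exists \emph{one} $k$ such that for \emph{every} admissible $\phi$ the function $\hatkt$ fails to be nonnegative. The bump width $w$ is a parameter of your $k$, so it must be fixed before $\phi$ is given; but $\rho$ is the radius on which $\hat\phi$ stays below $\frac12\hat\phi(\eta_0)$, and it depends on $\phi$ and can be arbitrarily small. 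Choosing $w<\rho$ after seeing $\phi$ changes $k$, so what you have actually proved is the weaker statement with the quantifiers swapped: for every $\phi$ there exists a $k$ (depending on $\phi$) for which $\hatkt$ is somewhere negative. With $w$ fixed, your main term equals $a_m(\beta*\hat\phi)(\eta_0)=(2\pi)^d a_m\,\widehat{B\phi}(\eta_0)$, where $\hat B=\beta$; negativity of $\widehat{B\phi}$ at some point is \emph{not} guaranteed by the lemma you invoke, because $B\phi$ is no longer identically $1$ on a cube --- a fixed-width bump $\beta$ can average a thin negative dip of $\hat\phi$ against its positive surroundings, and nothing in your argument excludes $\beta*\hat\phi\geq 0$ everywhere.

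The repair is exactly the device used in the paper: let the bump at frequency $\omega_\ell$ have width shrinking polynomially in the scale index, i.e.\ take $\hat k(\omega)=\sum_{\ell\geq 1}2^{-2r\ell}\bigl(h(\ell(\omega-\omega_\ell))+h(\ell(\omega+\omega_\ell))\bigr)$. Then for any fixed admissible $\phi$ the main term becomes $\ell^{-d}2^{-2r\ell}\int \hat\phi(\omega^*-\ell^{-1}\xi)\,h(\xi)\,d\xi$, which for large $\ell$ is close to $\ell^{-d}2^{-2r\ell}\hat\phi(\omega^*)\int h<0$ by continuity of $\hat\phi$ alone, no matter how small its ``negativity radius'' is; and the extra polynomial factor $\ell^{-d}$ is harmless in your domination step, since $\ell^{d}\,2^{2r\ell}\,2^{-2s\ell}\to 0$ precisely because $s>r$. (Note the width must shrink only polynomially, not like $2^{-\ell}$, since a geometric loss of mass would require $2s>2r+d$ rather than $s>r$.) With this single modification your argument coincides with the paper's proof.
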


\begin{proof}
Let $h$ be a non-negative, smooth, even function on $\R^d$ with $h(0) = 1$ and support contained in the unit ball. 
For $\ell \in \N$, we choose arbitrary but fixed $\omega_\ell \in \R^d$ such that $\abs{\omega_\ell} = 2^\ell$.
We now define $k$ by its Fourier transform as 
\be
  \hat k(\omega) := \sum_{\ell \geq 1} 2^{-2r\ell} \Bigl(  h\bigl(\ell (\omega - \omega_\ell)\bigr)  +  h\bigl(\ell (\omega + \omega_\ell)\bigr)  \Bigr).
\ee
Then clearly, \eqref{deck} is satisfied.  As demonstrated above, there exists $\omega^*\in\R^d$ such that $\hat\phi(\omega^*)<0$.
For $\ell> 1$, consider
\be
  \hatkt(\omega^* + \omega_\ell) = (2\pi)^{-d} \int_{\R^d} \hat\phi(\omega^* - \xi) \,\hat k(\xi+ \omega_\ell )\,d\xi =  (2\pi)^{-d} \bigl( I_1(\ell) + I_2(\ell) \bigr),
\ee
where
\[
\begin{aligned}
I_1(\ell) &:=\int_{\R^d} \hat \phi(\omega^* - \xi)   2^{-2r\ell} h(\ell \xi)     \,d\xi= \ell^{-d} 2^{-2r\ell}\int_{\R^d} \hat\phi(\omega^* - \ell^{-1} \xi) \, h(\xi) \,d\xi ,\\
 I_2(\ell) &:= \int_{\R^d} \hat \phi(\omega^* - \xi) \bigl(  \hat k(\xi + \omega_\ell )   - 2^{-2r\ell} h(\ell \xi)   \bigr)   \,d\xi.
\end{aligned}
\]
On the one hand, 
\be
  \lim_{\ell \to \infty} \ell^d 2^{2r\ell} I_1(\ell)  =  \hat\phi(\omega^*) \int_{\R^d} h(\xi)\,d\xi <0.
\ee
On the other hand, $\hat k \in L^1(\R^d)$ and $\hat k(\xi + \omega_\ell )   - 2^{-2r\ell}  h(\ell \xi)$ vanishes for $\abs{\xi} \leq 2^{\ell-2}$. 
As a consequence, 
\be
  \abs{I_2(\ell)} \leq  \norm{\hat k}_{L^1(\R^d)} \max_{\abs{\xi}  \geq 2^{\ell-2}}  \abs{\hat\phi(\omega^* - \xi)} .
\ee
For $\ell$ such that $2^{\ell-2} > 2\abs{\omega^*}$, we thus have
\be
   \abs{I_2(\ell)} \leq  C  \norm{\hat k}_{L^1(\R^d)}  ( 1+ 2^{2\ell-6})^{-s}.
\ee
Therefore 
\be
  \lim_{\ell \to \infty} \ell^d 2^{2r\ell} |I_2(\ell)|=0.
\ee
As a consequence, $\hatkt(\omega^* + \omega_\ell) < 0$ for sufficiently large $\ell$.
\end{proof}

The above counterexample reveals that further assumptions are needed on the covariance function $k$. 
Specifically, we work under the stronger assumptions
\be\label{ksandwich}
c(1+|\omega|^2)^{-s}\leq \hat k(\omega)\leq C(1+|\omega|^2)^{-r},
\ee
for some $s\geq r >d/2$ and $0<c\leq C$, and 
\be\label{kderivint}
  \lim_{R\to\infty}  \int_{|x|>R}   \abs{  \partial^\alpha k (x) } \,dx = 0, \quad  \abs{\alpha} \leq 2\ceil{s}.
\ee

\begin{remark}
In the case of the Mat\'ern covariance \eqref{matern} with parameters $\nu,\lambda > 0$, the assumption \eqref{ksandwich} holds with $s=r = \nu+d/2$ as a consequence of \eqref{maternfourier}. The assumption \eqref{kderivint} actually holds
for all derivation orders $\alpha$, as a consequence of the exponential decay of the modified Bessel functions of the second kind $K_\nu$ and of their derivatives.
This exponential decay can be seen, for example, from the integral representation
\be
  K_\nu(x) = \int_0^\infty e^{-x \cosh t} \cosh\nu t \, dt,
\ee
see \cite[p.\ 181]{Watson}.
\end{remark}

\begin{theorem}\label{phiexistence}
Let $k$ be an even function such that \eqref{ksandwich} and \eqref{kderivint} hold. Then for $\kappa > \delta$ sufficiently large, there exists $\phi$ satisfying \eqref{decphi}, $\phi|_{[-\delta,\delta]^d} = 1$, and $\phi(x)=0$ for $x\notin [-\kappa,\kappa]^d$ such that $\hatkt = (2\pi)^{-d}\,\hat k * \hat \phi$ is non-negative.
\end{theorem}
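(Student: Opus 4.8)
The plan is to choose $\phi$ so that $\kt = k\phi$ differs from $k$ only far from the origin, and then to use the lower bound in \eqref{ksandwich} to absorb the resulting perturbation of the Fourier transform. Setting $g := k(\phi-1)$, linearity of the Fourier transform gives $\hatkt = \hat k + \hat g$, so that by the lower bound in \eqref{ksandwich},
\be
  \hatkt(\omega) \geq c(1+\abs{\omega}^2)^{-s} - \abs{\hat g(\omega)}.
\ee
Hence it suffices to produce $\phi$ with the required support and normalization and such that $\abs{\hat g(\omega)} \leq c(1+\abs{\omega}^2)^{-s}$ for all $\omega$; the decay condition \eqref{decphi} will come for free once $\phi$ is taken smooth with compact support.

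To bound $\hat g$ I would use its smoothness. Set $m := \ceil{s} \geq s$. Since $(1+\abs{\omega}^2)^m$ is a polynomial of degree $2m$ with non-negative coefficients $c_\beta$ in the monomials $\omega^{2\beta}$, $\abs{\beta} \leq m$, the standard estimate $\abs{\omega^{2\beta}\hat g(\omega)} = \bigabs{\widehat{\partial^{2\beta}g}(\omega)} \leq \norm{\partial^{2\beta}g}_{L^1}$ yields
\be
  \abs{\hat g(\omega)} \leq C_g\,(1+\abs{\omega}^2)^{-m} \leq C_g\,(1+\abs{\omega}^2)^{-s}, \qquad C_g := \sum_{\abs{\beta}\leq m} c_\beta\norm{\partial^{2\beta}g}_{L^1},
\ee
the last inequality using $m\geq s$. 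The derivatives appearing here are of order at most $2m = 2\ceil{s}$, exactly the range covered by \eqref{kderivint}. It therefore remains to exhibit $\phi$ for which $C_g \leq c$.

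The decisive choice is to force $\phi$ to equal $1$ not merely on $[-\delta,\delta]^d$ but on a large box. I would fix an even profile $\Theta\in C^\infty(\R^d)$ with $\Theta = 1$ on $[-\frac12,\frac12]^d$ and $\Theta = 0$ outside $[-1,1]^d$, and set $\phi(x) := \Theta(x/\kappa)$. Then $\phi = 1$ on $[-\frac{\kappa}{2},\frac{\kappa}{2}]^d \supseteq [-\delta,\delta]^d$ whenever $\kappa \geq 2\delta$, $\phi = 0$ outside $[-\kappa,\kappa]^d$, and every derivative of $\phi$ of order $j$ is bounded by $\kappa^{-j}$ times a constant depending only on $\Theta$. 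Since $\phi\in C^\infty$ with compact support, $\hat\phi$ is a Schwartz function and \eqref{decphi} holds (with a $\kappa$-dependent constant, which is harmless as only existence is claimed). The point of this choice is that $\phi - 1$ is supported in $\{\abs{x}_\infty > \kappa/2\}$, where $k$ and its derivatives are small. Expanding $\partial^\alpha g$ by the Leibniz rule, the term with all derivatives falling on $k$ is $(\partial^\alpha k)(\phi-1)$, whose $L^1$ norm is at most $\int_{\abs{x} > \kappa/2}\abs{\partial^\alpha k}$ and hence tends to $0$ by \eqref{kderivint}; each remaining term $(\partial^\beta k)(\partial^{\alpha-\beta}\phi)$ with $\beta < \alpha$ is supported in $\{\kappa/2 < \abs{x}_\infty < \kappa\}$ and is bounded in $L^1$ by $\norm{\partial^{\alpha-\beta}\Theta}_{L^\infty}\int_{\abs{x} > \kappa/2}\abs{\partial^\beta k}$, which again tends to $0$. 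Thus $\norm{\partial^\alpha g}_{L^1}\to 0$ as $\kappa\to\infty$ for every $\abs{\alpha}\leq 2m$, so $C_g\to 0$, and choosing $\kappa$ large enough that $C_g\leq c$ makes $\hatkt$ non-negative.

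I expect the main obstacle to be precisely this vanishing of $C_g$. The naive construction with $\phi\equiv1$ only on $[-\delta,\delta]^d$ fails, because then $(\partial^\alpha k)(\phi-1)$ is integrated over the \emph{fixed} region $\{\abs{x}_\infty>\delta\}$ and contributes a positive amount that does not decrease as $\kappa\to\infty$. The resolution — forcing $\phi\equiv1$ on the growing box $[-\frac{\kappa}{2},\frac{\kappa}{2}]^d$, so that the entire perturbation $\phi-1$ lives in a region escaping to infinity — is what lets \eqref{kderivint} drive all the derivative norms to zero. It remains only to verify the harmless technical points that $k\in L^1(\R^d)$ (so the splitting $\hatkt=\hat k+\hat g$ is legitimate), which follows from the case $\alpha=0$ of \eqref{kderivint} together with the boundedness of $k$.
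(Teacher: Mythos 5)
Your proposal is correct and follows essentially the same route as the paper's proof: the same decomposition $\hatkt = \hat k - \widehat{k(1-\phi)}$ bounded below via \eqref{ksandwich}, the same Fourier-decay estimate through $L^1$ norms of derivatives up to order $2\ceil{s}$ (your multinomial expansion of $(1+\abs{\omega}^2)^m$ is just the paper's $(I-\Delta)^p$ device written out), and crucially the same key idea of rescaling a fixed cutoff profile so that $\phi\equiv 1$ on a box growing like $\kappa/2$, which is exactly how the paper makes \eqref{kderivint} force the perturbation to vanish as $\kappa\to\infty$.
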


\begin{proof}
We first choose a function $\phi_{2\delta} \in C^{2p}(\R^d)$ with $p := \ceil{s}$,
supported on $[-2\delta,2\delta]^d$ and such that ${\phi_{2\delta}}|_{[-\delta,\delta]^d} = 1$. Then for each $\kappa\geq 2\delta$, we define
\be
\phi_\kappa(x)=\phi_{2\delta}(2\delta x/\kappa).
\ee
We thus have, for all $\kappa\geq 2\delta$,
\be\label{phiderivbound}
  \max_{\abs{\alpha}\leq 2p} \norm{\partial^\alpha \phi_\kappa}_{L^\infty} \leq  
 \max_{\abs{\alpha}\leq 2p}  \norm{\partial^\alpha \phi_{2\delta}}_{L^\infty} =: D <\infty.
\ee
Note that $\phi_\kappa$ satisfies  \eqref{decphi} with a constant $C$ that depends on $\kappa$.
For a value of $\kappa$ to be fixed further, we take $\phi:=\phi_\kappa$, and let
 $\theta := 1 - \phi$. Then
\be
  \hatkt(\omega) = \hat k(\omega) - \widehat{k\theta}(\omega) \geq c(1 + \abs{\omega}^2)^{-s} - \widehat{k\theta}(\omega), 
\ee
and 
\be
  \bigabs{\widehat{k\theta}(\omega)}  \leq ( 1 + \abs{\omega}^2)^{-p} \int_{\R^d} \abs{(I - \Delta)^{p} ( k \theta ) } \,dx \le ( 1 + \abs{\omega}^2)^{-s} \int_{\R^d} \abs{(I - \Delta)^{p} ( k \theta ) } \,dx.
 \ee
By repeated application of Leibniz' rule and separately bounding each term, one finds 
\be
\int_{\R^d} \abs{(I - \Delta)^{p} ( k \theta ) } \,dx \leq C(\kappa) :=  (1 + 3d)^{p} D \max_{\abs{\alpha} \leq 2p} \int_{|x|>\kappa/2}   \abs{  \partial^\alpha k (x) } \,dx.
\ee
 Since $\lim_{\kappa \to \infty} C(\kappa) = 0$ by \eqref{kderivint}, it follows that $\hatkt$ is non-negative for $\kappa$ 
 chosen large enough such that $C(\kappa)\leq c$.
\end{proof}

\section{Karhunen-Lo\`eve representations}\label{sec:kl}

Let us recall that the standard Karhunen-Lo\`eve (KL) decomposition of the stationary Gaussian process $b$ has
the form \iref{bexpansion} with
\be
\psi_j=\psi_j^{\rm KL}:= \sqrt { \lambda_j}\, \vp_j,
\ee
where $(\lambda_j)_{j\geq 1}$ is the sequence of positive eigenvalues
of the covariance operator
\be
T: v \mapsto Tv=\int_D k(\cdot-x) \,v(x)\,dx,
\ee
arranged in decreasing order, and $(\vp_j)_{j\geq 1}$ is the associated $L^2(D)$-orthonormal basis of eigenfunctions.

Working under assumptions \eqref{ksandwich} and \eqref{kderivint},
the periodized construction described in the previous section provides us with an alternative decomposition based
on the covariance operator associated to the periodized process $\bp$, that is,
\be
T_{\rm p}: v \mapsto T_{\rm p} v=\int_{\T} \kp(\cdot-x)\, v(x)\, dx,
\ee
with $\T=[-\gamma,\gamma]^d$. The $L^2(\T)$-orthonormal eigenfunctions of this operator are explicitly given by
the trigonometric functions
\be
\theta_n(z):= t_{n_1}(z_1)\cdots t_{n_d}(z_d), \quad n=(n_1,\dots,n_d) \in \N_0^d,
\ee
where $t_0(z)=(2\gamma)^{-1/2}$ and
\be
t_{2m}(z)=\gamma^{-1/2}\cos\left(\frac {m\pi z} \gamma \right) \quad{\rm and }\quad t_{2m-1}(z)=\gamma^{-1/2}\sin\left(\frac {m\pi z} \gamma\right), \quad m\geq 1.
\ee
The eigenvalues are related to the Fourier coefficients $c_n(\kp)$ defined in \iref{fourierkp}. 
For the above eigenfunction $\theta_n$, with each $n_i$ being either of the form
$2m_i$ or $2m_i-1$, the corresponding eigenvalue is
\be
c_m(\kp), \quad m=(m_1,\dots,m_d).
\ee
We denote by $(\lambda_{\mathrm{p}, j})_{j\geq 1}$ a decreasing rearrangement
of these eigenvalues, 
with corresponding eigenfunctions $(\varphi_{\mathrm{p}, j})_{j\geq 1}$. We may thus write 
\be
\bp=\sum_{j\geq 1} y_j \psi_{\mathrm{p}, j},
\ee
where the $y_j$ are i.i.d.\ $\cN(0,1)$ and
\be
\psi_{\mathrm{p}, j}:=\sqrt {\lambda_{\mathrm{p}, j}}\,\varphi_{\mathrm{p}, j}.
\ee
Since $\bp\sim b$ on $D$, this yields a decomposition of $b$ by restriction, that is, taking
\be
\psi_j=\psi_j^{\rm R}:=\sqrt {\lambda_{\mathrm{p}, j}}\,\varphi_{\mathrm{p}, j}|_{D}.
\label{psirest}
\ee

Note that from \iref{deckp} we obtain
a decay estimate of the form
\be
\lambda_{\mathrm{p}, j} \leq C j^{-\frac {2r}d},\quad j\geq 1,
\label{estimdecklper}
\ee
for some $C>0$.  One first observation is that a similar decay estimate holds for 
the original eigenvalues $\lambda_j$.

\begin{theorem}
If the covariance function satisfies \eqref{ksandwich} and \eqref{kderivint}, then we have
\be
\lambda_j \leq C j^{-\frac {2r}d},\quad j\geq 1,
\label{estimdeckl}
\ee
for some $C>0$. 
\end{theorem}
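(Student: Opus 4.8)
The plan is to deduce \eqref{estimdeckl} from the periodic estimate \eqref{estimdecklper} by a variational comparison, showing that each eigenvalue of $T$ is dominated by the corresponding eigenvalue of the periodic covariance operator $T_{\rm p}$, i.e.\ $\lambda_j \leq \lambda_{\mathrm{p}, j}$ for all $j$. Since \eqref{estimdecklper} already furnishes $\lambda_{\mathrm{p}, j} \leq C j^{-2r/d}$, this comparison immediately yields the claim. The whole argument rests on the fact that $T$ is, up to isometric identification, a compression of $T_{\rm p}$ to the subspace of $L^2(\T)$ consisting of functions supported in $D$.

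First I would record the operator identity relating the two covariance operators. Because $D$ embeds into $[-\frac\delta2,\frac\delta2]^d$, we have $D - D \subseteq [-\delta,\delta]^d$, and on this set $\kp$ agrees with $k$ by construction. Let $E\colon L^2(D)\to L^2(\T)$ denote extension by zero; then $E$ is an isometry onto the functions supported in $D$, and its adjoint $E^*$ is restriction to $D$. For $v\in L^2(D)$ and $x\in D$, every difference $x-x'$ with $x'\in D$ lies in $[-\delta,\delta]^d$, so that
\be
(E^* T_{\rm p} E\, v)(x) = \int_\T \kp(x-x')\,(Ev)(x')\,dx' = \int_D k(x-x')\,v(x')\,dx' = (Tv)(x),
\ee
whence $T = E^* T_{\rm p} E$. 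Both $T$ and $T_{\rm p}$ are compact, self-adjoint and positive semidefinite, the latter being genuinely positive thanks to the positivity of the coefficients $c_n(\kp)$ established in Theorem~\ref{phiexistence}, so their min-max values coincide with the ordered eigenvalues.

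Next I would apply the Courant--Fischer principle. For any $v\in L^2(D)$ with $\|v\|_{L^2(D)}=1$ we have $\|Ev\|_{L^2(\T)}=1$ and $\langle Tv,v\rangle_{L^2(D)} = \langle T_{\rm p}(Ev),Ev\rangle_{L^2(\T)}$, so for every $j$-dimensional subspace $S\subseteq L^2(D)$ the inner minimum of the Rayleigh quotient over $S$ equals that over the $j$-dimensional subspace $E(S)\subseteq L^2(\T)$. Taking the maximum over all such $S$ thus maximizes over a subfamily of the $j$-dimensional subspaces of $L^2(\T)$, giving
\be
\lambda_j = \max_{\dim S = j}\ \min_{\substack{v\in S\\ \|v\|_{L^2(D)}=1}} \langle Tv,v\rangle_{L^2(D)} \leq \max_{\dim W = j}\ \min_{\substack{w\in W\\ \|w\|_{L^2(\T)}=1}} \langle T_{\rm p} w,w\rangle_{L^2(\T)} = \lambda_{\mathrm{p}, j}.
\ee
Combining this with \eqref{estimdecklper} produces \eqref{estimdeckl}.

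The only genuinely substantive step is the operator identity $T = E^* T_{\rm p} E$; everything else is the standard monotonicity of min-max values under restriction to a subspace. That identity is precisely where the periodic continuation earns its keep: it is valid only because $\kp$ was engineered to coincide with $k$ on all of $[-\delta,\delta]^d\supseteq D-D$, rather than being the naive periodization of $k$. I expect no real obstacle beyond verifying this coincidence of kernels on the relevant range and confirming that $E$ is an isometry with $E^*$ the restriction map; once these are in place the comparison $\lambda_j\leq\lambda_{\mathrm{p}, j}$, and hence the desired decay, follow routinely.
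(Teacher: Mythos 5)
Your proposal is correct, but it follows a genuinely different route from the paper's own proof. The paper never compares individual eigenvalues; it compares tail sums. It invokes the optimality property of Karhunen--Lo\`eve subspaces, $\sum_{j>n}\lambda_j=\min_{\dim(V)=n}\E\bigl(\norm{b-P_V b}^2_{L^2(D)}\bigr)$, bounds this minimum by the projection error onto the space $W_n$ spanned by the restrictions to $D$ of the first $n$ periodic eigenfunctions, and then uses the fact that $b$ and $\bp$ agree on $D$ together with $\norm{(\bp-P_{V_{\mathrm{p},n}}\bp)|_D}_{L^2(D)}\leq\norm{\bp-P_{V_{\mathrm{p},n}}\bp}_{L^2(\T)}$ to obtain $\sum_{j>n}\lambda_j\leq\sum_{j>n}\lambda_{\mathrm{p},j}\leq Cn^{1-\frac{2r}{d}}$; the pointwise bound \eqref{estimdeckl} then follows from the monotonicity of the $\lambda_j$ (e.g.\ $n\lambda_{2n}\leq\sum_{j>n}\lambda_j$). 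Your argument instead establishes the stronger intermediate statement $\lambda_j\leq\lambda_{\mathrm{p},j}$ for every single $j$, via the compression identity $T=E^*T_{\rm p}E$ and Courant--Fischer, and therefore needs neither the stochastic-process formulation nor the final tail-sum-to-eigenvalue conversion. Both proofs hinge on exactly the same geometric fact -- that $\kp$ agrees with $k$ on $[-\delta,\delta]^d\supseteq D-D$, so that zero-extension/restriction intertwines the two kernels -- and both use the positivity of the $c_n(\kp)$ guaranteed by the construction of \S 2. What the paper's route buys is consistency with its approximation-theoretic framing (mean-square truncation errors of expansions, which is the quantity that matters in the rest of the paper); what yours buys is a sharper conclusion (termwise domination of the spectra rather than domination of tail sums) and a purely deterministic, operator-theoretic proof. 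Your reasoning at each step -- the kernel coincidence on $D-D$, the isometry property of $E$, and the monotonicity of min-max values under passage to the subfamily of subspaces of the form $E(S)$ -- is sound as written.
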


\begin{proof}
We denote by 
\be
V_n = \linspan\{ \varphi_1,\ldots,\varphi_n \},
\ee
the spaces generated by the Karhunen-Lo\`eve
functions. These spaces satisfy the optimality property
\be
  \sum_{j>n} \lambda_j= \E \bigl( \norm{b - P_{V_n} b}^2_{L^2(D)} \bigr)=\min_{\dim(V)=n} \E \bigl( \norm{b - P_V b}^2_{L^2(D)} \bigr),
 \ee
 where $P_V$ is the $L^2(D)$-orthogonal projector. With
 \be
 V_{\mathrm{p},n} := \linspan\{ \varphi_{\mathrm{p},1},\ldots,\varphi_{\mathrm{p},n}\},
 \ee
the spaces generated by the KL functions of $\bp$, we denote by
 \be
 W_n:= \linspan\{ \varphi_{\mathrm{p},1}|_{D},\ldots,\varphi_{\mathrm{p},n}|_{D}\}
 \ee
 their restriction to $D$. We thus have
 \be
  \sum_{j>n} \lambda_j\leq \E \bigl( \norm{b - P_{W_n} b}^2_{L^2(D)} \bigr),
 \ee
 and since $b$ and $\bp$ agree on $D$, it follows that
 \be
  \sum_{j>n} \lambda_j\leq \E \bigl( \norm{\bp - P_{V_{\mathrm{p},n}} \bp}^2_{L^2(\T)} \bigr),
 \ee
 where $P_{V_{\mathrm{p},n}}$ is the $L^2(\T)$-orthogonal projector. Therefore
 \be
  \sum_{j>n} \lambda_j\leq  \sum_{j>n}\lambda_{\mathrm{p}, j} \leq Cn^{1-\frac {2r}d}, \quad n\geq 1,
 \ee
where the second inequality follows from \iref{estimdecklper}. Since the $\lambda_j$ are positive non-increasing,
this implies the decay estimate \iref{estimdeckl}. 
\end{proof}

\begin{remark}
 In the case of the Mat\'ern covariance, in view of \iref{maternfourier},
 we therefore obtain \iref{estimdeckl} with the value $r:=\nu+d/2$.
 This estimate was derived in \cite{GKNSSS} by a different approach, using the theory developed by Widom
 for the eigenvalues of convolution-type operators. This theory makes the assumption that 
 $\hat k$ is unimodal in each variable, see \cite[p.\,290]{W}, which holds for Mat\'ern covariances, but is not 
 needed in the above construction based on assumptions \eqref{ksandwich} and \eqref{kderivint}.
 \end{remark}

One interest of using the representation based on the functions $\psi^{\rm R}_j$ defined by restriction 
according to \iref{psirest} is that the functions $\varphi_{\mathrm{p}, j}$ are explicitly given
by tensorized trigonometric functions. In particular, they are uniformly bounded. It follows that
\be
\|\psi_j^{\rm R}\|_{L^\infty} \leq C\lambda_{\mathrm{p}, j}^{1/2},\quad j\geq 0, \quad 
\ee
and therefore, by \iref{estimdecklper},
\be
\|\psi_j^{\rm R}\|_{L^\infty} \leq Cj^{-\frac{r}d},\quad j\geq 0.
\ee
In contrast, such uniform bounds for the $L^\infty(D)$ norms are generally 
not available for the KL functions $\vp_j$, which are in addition not easily
computable in the case of a general multivariate domain.

In the particular case of Mat\'ern covariances, the $L^\infty$ norms of 
these functions can be estimated by an argument 
introduced in \cite{GKNSSS}, which uses their natural connections with Hilbertian Sobolev spaces. We
briefly recall this argument. Fixing an $s$ such that $\frac{d}2 < s < r=\nu+d/2$, and assuming that the
domain $D$ satisfies the uniform cone condition, we may use Sobolev embedding 
to obtain
\be
\norm{\vp_j}_{L^\infty(D)} \leq C\norm{\vp_j}_{H^s(D)}.
\ee
We then find by interpolation that
\be
 \norm{\vp_j}_{L^\infty(D)}  \leq C\norm{\vp_j}_{L^2(D)}^{1 - s/r}\, \norm{\vp_j}_{H^r(D)}^{s/r}=C\norm{\vp_j}_{H^r(D)}^{s/r} . 
\ee
In order to estimate the $H^r(D)$ norms of the functions $\vp_j$, we
use the following bound for the covariance operator $T$: for any $v\in L^2(D)$,
denoting by $w$ its extension by zero to $\R^d$, we have
\begin{align*}
\|Tv\|_{H^r(D)}^2 & = \| (k*w)|_D\|_{H^r(D)}^2 \leq \|k*w\|_{H^r(\R^d)}^2\\
& = \int_{\R^d}(1+|\omega|^2)^{r} |\hat k(\omega) \hat w(\omega)|^2 d\omega\\
& \leq  C\int_{\R^d}(1+|\omega|^2)^{-r} |\hat w(\omega)|^2 d\omega\\
& \leq  C\int_{\R^d} \hat k(\omega) |\hat w(\omega)|^2 d\omega \\
& = C\langle k*w,w\rangle_{L^2(\R^d)} \leq C \|Tv\|_{L^2(D)}\|v\|_{L^2(D)},
\end{align*}
where we have used the characterization of Hilbertian Sobolev spaces by Fourier transforms and the
particular form of the Mat\'ern covariance. Taking $v=\vp_j$ and using $T\vp_j=\lambda_j\vp_j$,
we thus obtain 
\be
\norm{\vp_j}_{H^r(D)}
\leq C\lambda_j^{-1/2}.
\ee
In summary we have obtained the non-uniform bound
\be
  \norm{\vp_j}_{L^\infty(D)} \leq C \lambda_j^{-\frac {s}{2r}} 
  \label{estvp}
\ee
and therefore, by \iref{estimdeckl},
\be
  \norm{\psi^{\rm KL}_j}_{L^\infty(D)} \leq C j^{-\frac {r-s} d}.
\ee
In particular, we may take $s = \frac{d}{2} + \varepsilon$ for any sufficiently small $\varepsilon>0$ to obtain
\be
   \norm{\psi^{\rm KL}_j}_{L^\infty(D)} \leq C j^{-\frac{r}{d} + \frac12 + \varepsilon}. 
\ee
This needs to be compared with \eqref{estimdecklper}, which in the present case of the Mat\'ern covariance yields
\be\label{psiRdec}
  \norm{\psi^{\rm R}_j}_{L^\infty(D)} \leq C j^{-\frac{r}{d}},
\ee
since $\norm{\vp_{\mathrm{p},j}}_{L^\infty(D)} \leq 1$.

Let us mention that in the particular univariate case $d=1$, numerical experiment seem to indicate that
$\norm{\vp_j}_{L^\infty(D)}$ stays bounded independently of $j$, and therefore that the
upper bound \iref{estvp} is not always sharp. On the other hand, one can also exhibit 
examples of stationary processes such that the corresponding KL functions on the domain $D$
are not uniformly bounded. Take for example the case $D=[-1,1]$ and $k$ such that
$\hat k=\Chi_{[-F,F]}$ for which the KL functions $\vp_j$ coincide with  the univariate prolate
spheroidal functions introduced in \cite{Sl}. It is known that these functions are uniformly
close to the Legendre polynomials $L_j$ as $j\to \infty$, which shows that 
$\norm{\vp_j}_{L^\infty(D)}\sim j^{1/2}$, see \cite{BK}.

In summary, we have obtained substantially better bounds on the decay of $\norm{\psi^{\rm R}_j}_{L^\infty(D)}$ than available for $\norm{\psi^{\rm KL}_j}_{L^\infty(D)}$,
and in addition the $\psi^{\rm R}_j$ are easily computed numerically while this is generally not the case for the $\psi^{\rm KL}_j$.
However, \eqref{psiRdec} still leads to rather severe restrictions on the values of $r$ for which Theorem \ref{bcdmthm} is applicable via Corollary \ref{globcor},
due to the global supports of the functions $\psi^{\rm R}_j$.
In the following section we consider an alternative wavelet-type construction for which Theorem \ref{bcdmthm}, with Corollary \ref{wavcor}, yields an approximation rate for corresponding solutions of $u$ for {any} $r>\frac d 2$.

\section{Wavelet representations}\label{sec:wavelets}

Our starting point is an $L^2(\R)$-orthonormal wavelet basis, that is a basis of the form
\be
\{\vp(\cdot-n)\}_{n\in\Z} \cup\{ 2^{\ell/2}\psi(2^\ell \cdot-n)\}_{\ell\geq 0,n\in\Z}
\ee
where $\vp$ and $\psi$ the scaling function and mother wavelet, respectively. For simplicity
we use the Meyer wavelets, whose construction is detailed in \cite{Dau,Me}, and for which
\be
\supp(\hat \vp)=\biggl[-\frac{4\pi}{3},\frac{4\pi}{3}\biggr]\quad {\rm and}\quad \supp(\hat \psi)=\biggl[-\frac{8\pi}{3}, -\frac{2\pi}{3}\biggr]\cup \biggl[\frac{2\pi}{3},\frac{8\pi}{3}\biggr].
\ee
The functions $\hat \vp$ and $\hat \psi$ may be chosen to be smooth, but for our purpose it will be enough to
assume that they have $M$ uniformly bounded derivatives with an integer $ M \geq d+1$.

Denoting $\psi_0 := \varphi$ and $\psi_1 := \psi$, the multivariate scaling function and wavelets are defined by
\be
 \Phi(x) := \varphi(x_1)\cdots\varphi(x_d),\qquad \Psi_\varepsilon(x) := \psi_{\varepsilon_1}(x_1)\cdots\psi_{\varepsilon_d}(x_d), \quad 
  \varepsilon \in \cC,
\ee
where $\cC:=\{0,1\}^d\setminus \{(0,\ldots,0)\}$. Then 
\be
 \{ \Phi(\cdot - n)\colon n\in\Z^d\} \cup \{  \Psi_{\e,n,\ell} \colon n\in\Z^d , \ell \geq 0, \varepsilon\in\cC \},
\ee
is an orthonormal basis of $L^2(\R^d)$, where we have used the notation
\be
\Psi_{\e,n,\ell}:=2^{d\ell/2} \Psi_\varepsilon (2^\ell \cdot -n).
\ee 
We obtain an orthonormal basis of $L^2(\T)$, by rescaling
and periodization. This basis consists of the constant scaling function
\be
\Phi^{\rm p}(x) := \sum_{m\in\Z^d} (2\gamma)^{-d/2} \Phi\bigl((2\gamma)^{-1}x - m\bigr) = (2\gamma)^{-d/2},
\ee
and the $\T$-periodic wavelets
\be
 \Psi^{\rm p}_{\varepsilon,\ell,n} (x) := 
 \sum_{m\in\Z^d} (2\gamma)^{-d/2} \Psi_{\e,n,\ell} \bigl((2\gamma)^{-1} x -  m \bigr),
\ee
for $n\in \{0,\dots,2^{\ell}-1\}^d, \;\ell\geq 0,\; \e\in\cC$. From the Poisson summation formula, and the support 
properties of $\hat \vp$ and $\hat \psi$, it is 
easily seen that the above wavelets, at a given scale level $\ell$, are finite linear combinations of the Fourier exponentials $e_n$ 
with $\|n\|_\infty \leq 2^{\ell +2}$. In other words, they are trigonometric polynomials of degree at most $2^{\ell+2}$ in each variable.

We now make the following general observation: let $(g_j)_{j\geq 1}$ be any orthonormal basis of $L^2(\T)$, with each basis function
having the Fourier expansion
\be
g_j=(2\gamma)^{-d/2}\sum_{n\in\Z^d} c_n(g_j)\,e_n,
\ee
where
\be
e_n(z):=  (2\gamma)^{-d/2}  e^{i\frac {\pi}{\gamma}n\cdot z},\quad n\in\Z^d.
\ee
Then, defining the functions $(\bar g_j)_{j\geq 1}$ by $\bar g_j:=Sg_j$, where $S$ is
the filtering operator
\be
v\mapsto Sv:=(2\gamma)^{-d/2} \sum_{n\in\Z^d} \sqrt{c_n (\kp)} \,c_n(v)\,e_n,
\label{gjbar}
\ee
we obtain a decomposition
\be
\bp = \sum_{j\geq 1} y_j \bar g_j,
\ee 
where the $y_j$ are i.i.d.\ $\cN(0,1)$, and therefore $b = \sum_{j\geq 1} y_j \bar g_j|_{D}$.

We apply this procedure to the above described periodic wavelet basis,
therefore obtaining new periodic functions 
\be
\bar \Phi^{\rm p}=S\Phi^{\rm p} \quad {\rm and} \quad \bar \Psi^{\rm p}_{\varepsilon,\ell,n}:=S\Psi^{\rm p}_{\varepsilon,\ell,n},
\ee
which are adapted to the decomposition of $\bp$. By construction $\bar \Phi^{\rm p}$ is again a 
constant function with value $(2\gamma)^{-d/2}\sqrt{c_0 (\kp)}$,
and the functions $\bar \Psi^{\rm p}_{\varepsilon,\ell,n}$ are trigonometric polynomials of degree 
at most $2^{\ell+2}$ in each variable. We next study in more detail the size and localization
properties of these functions and show that they essentially behave like a wavelet basis
with normalization $2^{\ell (d/2-r)}$ in place of $2^{\ell d/2}$. This geometric decay of the $L^\infty$ norms, combined
with the localization properties, will allow us to apply Theorem \ref{bcdmthm} and Corollary \ref{wavcor}
for any value of $r>\frac d 2$.

Note that since $\kp$ has been obtained by periodizing the truncated function 
$\kt=k\phi$, an equivalent construction of the functions $\bar \Psi^{\rm p}_{\varepsilon,\ell,n}$ 
is obtained by first defining over $\R^d$ the rescaled and filtered wavelets $\bar \Psi_{\varepsilon,\ell,n}$ according to
\be
\widehat{\bar\Psi}_{\varepsilon,\ell,n}(\omega)=\hatkt^{\frac12}(\omega) \,(2\gamma)^{d/2}\widehat{\Psi}_{\varepsilon,\ell,n}(2\gamma \omega),
\label{filter}
\ee
and then applying $\T$-periodisation, that is,
\be\label{Psiperiodization}
\bar \Psi^{\rm p}_{\varepsilon,\ell,n}(x)=\sum_{m\in \Z^d}\bar \Psi_{\varepsilon,\ell,n}(x+2\gamma m).
\ee
We thus focus our attention on the size and localization properties of the functions $\bar \Psi_{\varepsilon,\ell,n}$.

Note that these functions inherit from the wavelet basis the translation invariance structure,
since, at any given scale level $\ell$, the function $\bar \Psi_{\varepsilon,\ell,n}$ are translates 
by $2\gamma 2^{-\ell}n$ of $\bar \Psi_{\varepsilon,\ell,0}$. However, 
they do not inherit the dilation invariance structure, since
the functions $\bar \Psi_{\varepsilon,\ell,0}$
are not obtained by a simple rescaling of $\bar \Psi_{\varepsilon,0,0}$. 
We introduce rescaled functions $F_{\varepsilon,\ell}$ such that
\be\label{PsiF}
\bar \Psi_{\varepsilon,\ell,0}(x)=2^{\ell (d/2-r)}F_{\varepsilon,\ell}(2^\ell x),
\ee
that is, $F_{\varepsilon,\ell}$ is defined by
\be
F_{\varepsilon,\ell}( x)=2^{-\ell (d/2-r)} \bar \Psi_{\varepsilon,\ell,0}(2^{-\ell} x).
\label{defF}
\ee
Our objective is now to show that the functions $F_{\varepsilon,\ell}$ satisfy a
uniform localization estimate, independently of $\e$ and $\ell$.

For this purpose, we require some additional assumptions on the covariance
function, namely that \eqref{ksandwich} holds with $s=r$, and that 
the partial derivatives of $\hat k$ satisfy improved decay estimates
\be
\label{derkdecay}
 \abs{\partial^\alpha \hat k (\omega)} \leq C ( 1+ \abs{\omega}^2)^{-(r+|\alpha|/2)} , \quad |\alpha| \leq M,
\ee   
for an $M \geq d+1$.
It is easily seen that Mat\'ern covariances satisfy such estimates, by straightforward differentiation of \iref{maternfourier}.

\begin{lemma}\label{sqrtkestimate}
Let $k$ satisfy \eqref{ksandwich}, \eqref{kderivint} with $s=r$ and \eqref{derkdecay} with an integer $M \geq d+1$.
Then $\phi$ in Theorem \ref{phiexistence} can be chosen such that
\be
\label{sqrtktdecay}
 \abs{\partial^\alpha \hatkt^{\frac12} (\omega)} \leq C ( 1+ \abs{\omega})^{-(r+|\alpha|)} , \quad 0\leq  |\alpha| \leq M.
\ee   
\end{lemma}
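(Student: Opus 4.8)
The plan is to reduce the bound on $\hatkt^{1/2}$ to two ingredients about $\hatkt$ itself — a positive two-sided pointwise estimate and derivative estimates — and then to differentiate the square root by the chain rule. The crucial preliminary step is to choose the cutoff $\phi$ \emph{more} smooth than strictly required in Theorem \ref{phiexistence}. I would take $\phi=\phi_\kappa$ of the form used there, but with the fixed profile $\phi_{2\delta}\in C^N(\R^d)$ for $N$ large enough (concretely $N\gtrsim 2r+M$) that
\be
  \abs{\hat\phi(\omega)}\leq C(1+\abs{\omega}^2)^{-(r+M/2)};
\ee
since $\phi$ has compact support, this decay is guaranteed by its smoothness. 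For such $\phi$ the proof of Theorem \ref{phiexistence} goes through unchanged, and with $s=r$ it yields not merely $\hatkt\geq 0$ but a quantitative lower bound: writing $\hatkt=\hat k-\widehat{k\theta}$ with $\abs{\widehat{k\theta}(\omega)}\leq C(\kappa)(1+\abs{\omega}^2)^{-r}$ and $C(\kappa)\to 0$, choosing $\kappa$ so that $C(\kappa)\leq c/2$ gives $\hatkt(\omega)\geq\tfrac c2(1+\abs{\omega}^2)^{-r}$. Combined with the upper bound $\hatkt(\omega)\leq C(1+\abs{\omega}^2)^{-r}$ from \eqref{deckt}, this pins down $\hatkt\sim(1+\abs{\omega}^2)^{-r}$; in particular $\hatkt>0$ everywhere, so $\hatkt^{1/2}$ is as smooth as $\hatkt$.

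Next I would establish the derivative estimates
\be
  \abs{\partial^\alpha\hatkt(\omega)}\leq C(1+\abs{\omega}^2)^{-(r+\abs{\alpha}/2)},\quad 1\leq\abs{\alpha}\leq M.
\ee
Writing $\partial^\alpha\hatkt=(2\pi)^{-d}(\partial^\alpha\hat k)*\hat\phi$ and repeating verbatim the splitting of \eqref{ktdecayestimate} with $\partial^\alpha\hat k$ in place of $\hat k$, the contribution in which $\hat\phi$ is evaluated at large argument is controlled by the decay of $\hat\phi$ chosen above (which dominates $(1+\abs{\omega}^2)^{-(r+\abs{\alpha}/2)}$ because $\abs{\alpha}\leq M$), while the contribution in which $\partial^\alpha\hat k$ is evaluated at large argument is controlled by \eqref{derkdecay}. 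Both $L^1$ norms appearing are finite since $r>d/2$, so the stated decay follows.

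Finally I would differentiate the square root. By the multivariate Faà di Bruno formula applied to $t\mapsto t^{1/2}$ composed with $g:=\hatkt$, the derivative $\partial^\alpha(\hatkt^{1/2})$ is a finite linear combination of terms
\be
  \hatkt^{\frac12-m}\prod_{j=1}^m\partial^{\beta_j}\hatkt,\quad \sum_{j=1}^m\beta_j=\alpha,\quad \abs{\beta_j}\geq 1,\quad 1\leq m\leq\abs{\alpha}.
\ee
Estimating the negative power by the lower bound, $\hatkt^{\frac12-m}\leq C(1+\abs{\omega}^2)^{r(m-1/2)}$, and each factor $\partial^{\beta_j}\hatkt$ by the derivative estimate, the exponents of $(1+\abs{\omega}^2)$ add up to $r(m-\tfrac12)-\sum_j(r+\abs{\beta_j}/2)=-\tfrac r2-\tfrac{\abs{\alpha}}2$, so every term is bounded by $C(1+\abs{\omega}^2)^{-(r+\abs{\alpha})/2}\leq C(1+\abs{\omega})^{-(r+\abs{\alpha})}$; the case $\alpha=0$ is immediate from the upper bound on $\hatkt$. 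Summing the finitely many terms yields the claim.

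The main obstacle is this last step: differentiating the square root produces negative powers $\hatkt^{\frac12-m}$ that grow like $(1+\abs{\omega}^2)^{r(m-1/2)}$, and it is precisely the uniform \emph{lower} bound $\hatkt\gtrsim(1+\abs{\omega}^2)^{-r}$ that keeps them finite and makes the powers cancel exactly against the extra decay of the derivative factors. This is why the quantitative strengthening of Theorem \ref{phiexistence}, together with the hypothesis $s=r$ (so that the lower and upper exponents match), is essential; a mere non-negativity statement would not suffice.
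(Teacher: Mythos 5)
Your proof is correct and takes essentially the same route as the paper's: you choose the cutoff with the strengthened smoothness (the paper takes $\phi_\kappa\in C^{2p}$ with $p=\ceil{r+M/2}$, matching your $\abs{\hat\phi(\omega)}\leq C(1+\abs{\omega}^2)^{-(r+M/2)}$), obtain the two-sided bound $\hatkt \sim (1+\abs{\omega}^2)^{-r}$ exactly as in Theorem \ref{phiexistence}, prove $\abs{\partial^\alpha\hatkt(\omega)}\leq C(1+\abs{\omega}^2)^{-(r+\abs{\alpha}/2)}$ by the same convolution-splitting argument applied to $\partial^\alpha\hat k * \hat\phi$, and then expand $\partial^\alpha\hatkt^{\frac12}$ into terms $\hatkt^{\frac12-m}\prod_\ell \partial^{\beta_\ell}\hatkt$ (your Faà di Bruno formula is the paper's inductive expansion) with identical exponent bookkeeping. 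The emphasis you place on the quantitative lower bound being the essential ingredient is precisely the point of the paper's estimate \eqref{framekt}.
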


\begin{proof}
We proceed exactly as in the proof of Theorem \ref{phiexistence} to obtain a family of functions $\phi_\kappa \in C^{2p}(\R^d)$ for $\kappa \geq 2 \delta$, supported on $[-2\delta,2\delta]^d$ and such that ${\phi_{2\delta}}|_{[-\delta,\delta]^d} = 1$, and satisfying
\be\label{phiderivboundext}
  \max_{\abs{\alpha}\leq 2p} \norm{\partial^\alpha \phi_\kappa}_{L^\infty} \leq  
 D <\infty
\ee
for some $D>0$, but here with $p := \ceil{r + M/2}$.

As in the proof of Theorem \ref{phiexistence}, we obtain that for $\phi := \phi_\kappa$ with $\kappa$ sufficiently large, there exist $\tilde c,\tilde C>0$ such that
\be
   \tilde c ( 1 + \abs{\omega}^2 )^{-r} \leq \hatkt(\omega) \leq \tilde C ( 1 + \abs{\omega}^2)^{-r}.
   \label{framekt}
\ee 
Note that $\phi$ obtained in this manner satisfies
\be
   \abs{\hat\phi(\omega)} \leq C ( 1 + \abs{\omega}^2)^{-p}.
\ee
Since $\hatkt = (2\pi)^{-d}\, \hat k * \hat\phi$, we have
\be
  \abs{ \partial^\alpha \hatkt(\omega) } = (2\pi)^{-d} \abs{(\partial^\alpha\hat k * \hat\phi)(\omega)} \leq C ( 1 + \abs{\omega}^2 )^{-(r + \abs{\alpha}/2)},
   \quad \abs{\alpha} \leq M,
\ee
by the same argument used for the upper inequality in \iref{deckt}. 

We now turn to \eqref{sqrtktdecay}. For $\alpha = 0$, this simply follows from \iref{derkdecay} combined with
$(1+|\omega|^2)\leq (1+|\omega|)^2$. For $\alpha \neq 0$ such that $\abs{\alpha} \leq M$, we obtain by induction that $\partial^\alpha\hatkt^{\frac12}$ is of the form
\be
     \partial^\alpha\hatkt^{\frac12} = \sum_{m=0}^{\abs{\alpha}}\sum_{\beta_1 + \cdots+\beta_m = \alpha} C_{\beta_1,\ldots,\beta_m} 
           \hatkt^{\frac12 - m} \prod_{\ell=1}^m \partial^{\beta_\ell} \hatkt
\ee
for certain $C_{\beta_1,\ldots,\beta_m} \in \R$. Since for $\beta_1,\ldots,\beta_m$ such that $\beta_1 + \cdots + \beta_m = \alpha$, we have
\be
   \hatkt^{\frac12 - m}(\omega) \prod_{\ell=1}^m \partial^{\beta_\ell} \hatkt(\omega)
     \leq C ( 1 + \abs{\omega}^2 )^{-\frac{r}2 + mr} \prod_{\ell=1}^m ( 1 + \abs{\omega}^2 )^{-(r + \abs{\beta_\ell}/2)},
     \ee
     and thus
     \be
   \hatkt^{\frac12 - m}(\omega) \prod_{\ell=1}^m \partial^{\beta_\ell} \hatkt(\omega)
      \leq C ( 1 + \abs{\omega})^{-(r+\abs{\alpha})},
      \ee
we arrive at \eqref{sqrtktdecay}.
\end{proof}

\begin{theorem}\label{Fdecay}
Let $k$ satisfy \eqref{ksandwich}, \eqref{kderivint} with $s=r$ and \eqref{derkdecay} with an integer $M \geq d+1$, and let $\phi$ be chosen as in Lemma \ref{sqrtkestimate}.
Then the functions $F_{\varepsilon,\ell}$ satisfy 
\be
|F_{\varepsilon,\ell}(x)|\leq C(1+|x|)^{-M},
\label{Fepselldec}
\ee
for some $C>0$ that is independent of $\ell$ and $\e$.
\end{theorem}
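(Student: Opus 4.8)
The plan is to work entirely on the Fourier side. First I would compute $\widehat{F}_{\varepsilon,\ell}$ explicitly from the definitions \eqref{defF} and \eqref{filter}, using the elementary scaling identity $\widehat{\Psi}_{\varepsilon,\ell,0}(\omega)=2^{-d\ell/2}\,\widehat{\Psi_\varepsilon}(2^{-\ell}\omega)$. Tracking all dilation factors, the normalization $2^{-\ell(d/2-r)}$ is seen to be chosen precisely so that every power of $2^\ell$ collapses except one, leaving
\[
\widehat{F}_{\varepsilon,\ell}(\omega)=2^{\ell r}(2\gamma)^{d/2}\,\hatkt^{\frac12}(2^\ell\omega)\,\widehat{\Psi_\varepsilon}(2\gamma\omega).
\]
The two structural features I would record here are that, since $\e\in\cC$ has at least one nonzero entry and $\widehat\psi$ is supported away from the origin, the function $\widehat{\Psi_\varepsilon}(2\gamma\,\cdot)$ is supported in a fixed annulus $\{c_1\leq|\omega|\leq c_2\}$ bounded away from $0$, and that it has $M$ uniformly bounded derivatives by hypothesis on the Meyer factors.

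To convert this into the pointwise decay \eqref{Fepselldec}, I would use the standard duality between decay of $F_{\varepsilon,\ell}$ and smoothness/integrability of its transform. Since $(1+|x|)^M$ is comparable, up to dimensional constants, to $\sum_{|\alpha|\leq M}|x^\alpha|$, and since $x^\alpha F_{\varepsilon,\ell}$ has Fourier transform equal to $\partial^\alpha\widehat{F}_{\varepsilon,\ell}$ up to a unimodular factor, it suffices to establish the uniform bound
\[
\|\partial^\alpha\widehat{F}_{\varepsilon,\ell}\|_{L^1(\R^d)}\leq C,\qquad |\alpha|\leq M,
\]
with $C$ independent of $\ell$ and $\e$. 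Indeed, $\|x^\alpha F_{\varepsilon,\ell}\|_{L^\infty}\leq (2\pi)^{-d}\|\partial^\alpha\widehat{F}_{\varepsilon,\ell}\|_{L^1}$, so summing over $|\alpha|\leq M$ yields a uniform bound on $(1+|x|)^M|F_{\varepsilon,\ell}(x)|$, which is exactly \eqref{Fepselldec}.

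For the uniform $L^1$ bound I would expand $\partial^\alpha\widehat{F}_{\varepsilon,\ell}$ by Leibniz' rule applied to the product of $\hatkt^{\frac12}(2^\ell\,\cdot)$ and $\widehat{\Psi_\varepsilon}(2\gamma\,\cdot)$. Each derivative of the first factor produces $2^{\ell|\beta|}(\partial^\beta\hatkt^{\frac12})(2^\ell\omega)$, which by Lemma \ref{sqrtkestimate} is bounded by $C\,2^{\ell|\beta|}(1+|2^\ell\omega|)^{-(r+|\beta|)}\leq C\,2^{\ell|\beta|}(2^\ell|\omega|)^{-(r+|\beta|)}=C\,2^{-\ell r}|\omega|^{-(r+|\beta|)}$, using only $1+t\geq t$ for $t>0$, so this holds for every $\ell\geq 0$. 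Multiplying by the prefactor $2^{\ell r}$ cancels the $2^{-\ell r}$ exactly, and the surviving factor $|\omega|^{-(r+|\beta|)}$ is bounded on the annular support by $c_1^{-(r+|\beta|)}$; the derivatives of $\widehat{\Psi_\varepsilon}(2\gamma\,\cdot)$ are bounded since $|\alpha-\beta|\leq|\alpha|\leq M$. Hence the integrand is bounded uniformly in $\ell$ and $\e$ over a fixed bounded set, giving the claimed $L^1$ estimate.

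The main obstacle, and really the crux of the construction, is the interplay between the $2^{\ell r}$ normalization and the band-pass support of $\widehat{\Psi_\varepsilon}$: the cancellation $2^{\ell r}\cdot 2^{-\ell r}=1$ is what delivers uniformity in $\ell$, while the support staying away from the origin (guaranteed by $\e\neq 0$, i.e.\ $\e\in\cC$) is what keeps $|\omega|^{-(r+|\beta|)}$ bounded and integrable. Were the origin included, as it is for the scaling function $\Phi$, the factor $|\omega|^{-r}$ would be singular and this argument would fail, which is precisely why only the wavelet indices $\e\in\cC$ are treated here. The remaining work is pure bookkeeping: verifying that the Leibniz expansion involves only derivatives of order $\leq M$, so that both Lemma \ref{sqrtkestimate} and the bounded derivatives of the Meyer factors apply, and that the comparison $(1+|x|)^M\sim\sum_{|\alpha|\leq M}|x^\alpha|$ introduces only harmless dimensional constants.
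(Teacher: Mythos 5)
Your proof is correct and follows essentially the same route as the paper: the same explicit formula $\wh F_{\varepsilon,\ell}(\omega)=(2\gamma)^{d/2}2^{\ell r}\hatkt^{1/2}(2^\ell\omega)\widehat{\Psi}_{\varepsilon}(2\gamma\omega)$, the same use of the annular support and Leibniz expansion, and the same cancellation of $2^{\ell r}$ against the $2^{-\ell r}$ coming from Lemma \ref{sqrtkestimate}. The only difference is that you spell out the final step (uniformly bounded derivatives plus uniform compact support imply the decay $(1+|x|)^{-M}$) via the bound $\norm{x^\alpha F_{\varepsilon,\ell}}_{L^\infty}\leq(2\pi)^{-d}\norm{\partial^\alpha\wh F_{\varepsilon,\ell}}_{L^1}$, which the paper leaves implicit.
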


\begin{proof}
From its definition \iref{defF} we have
\be
\wh F_{\varepsilon,\ell}(\omega)=2^{\ell (d/2+r)}  \wh{\bar\Psi}_{\varepsilon,\ell,0}(2^{\ell} \omega),
\ee
and therefore by \iref{filter},
\be
\wh F_{\varepsilon,\ell}(\omega)=(2\gamma)^{d/2} 2^{\ell (d/2+r)}  \hatkt^{\frac12}(2^\ell \omega) \,\widehat{\Psi}_{\varepsilon,\ell,0}(2\gamma 2^\ell \omega)=(2\gamma)^{d/2}2^{\ell r} \hatkt^{\frac12}(2^\ell \omega)\,
 \widehat{\Psi}_{\varepsilon}(2\gamma \omega),
\ee
where we have used the scaling relation between $\Psi_{\varepsilon,\ell,0}$ and $\Psi_{\varepsilon}$.
The functions $\wh F_{\varepsilon,\ell}$ are uniformly compactly supported since
\be
|\omega|_\infty \geq \frac {8\pi}{6\gamma} \quad \implies \quad  \widehat{\Psi}_{\varepsilon}(2\gamma \omega)=0.
\ee
Applying partial differentiation for any $\alpha$ such that $|\alpha|\leq M$, and using the multivariate Leibniz formula,
we find that 
\be
\partial^\alpha \wh F_{\varepsilon,\ell}(\omega)=(2\gamma)^{d/2}2^{\ell r}\sum_{\beta \leq \alpha} {\alpha\choose \beta}
\left( 2^{\ell |\beta|}\partial^\beta  \hatkt^{\frac12}(2^\ell \omega)\right) \left( (2\gamma)^{|\alpha|-|\beta|}\partial^{\alpha-\beta}\widehat{\Psi}_{\varepsilon}(2\gamma \omega)\right).
\ee
The second factor $(2\gamma)^{|\alpha|-|\beta|}\partial^{\alpha-\beta}\widehat{\Psi}_{\varepsilon}(2\gamma \omega)$ in each term is uniformly bounded independently of $\omega$ and $\beta$, in view
of the smoothness assumption that we have imposed on $\hat \psi$. As to the first factor, since we only consider
$\frac{2\pi}{6\gamma} \leq |\omega| \leq \sqrt{d} \frac {8\pi}{6\gamma}$, we may use \iref{sqrtktdecay} to conclude that
\be
|2^{\ell |\beta|}\partial^\beta  \hatkt^{\frac12}(2^\ell \omega)|\leq C2^{-r\ell}.
\ee
It follows that the derivatives $\partial^\alpha \wh F_{\varepsilon,\ell}(\omega)$
are uniformly bounded, independently of $\ell$ and $\e$, for all $|\alpha|\leq M$, which implies \iref{Fepselldec}
since they are in addition uniformly compactly supported.
\end{proof}

As we shall show next, Theorem \ref{Fdecay} implies that Corollary \ref{wavcor} can be applied to the wavelet basis defined by \eqref{Psiperiodization}, that is, with the basis in the corollary chosen as the scaling function $\bar\Phi^{\rm p}$ and the wavelets $\bar\Psi^{\rm p}_{\e,\ell,n}$ ordered by increasing scale level.

\begin{cor}\label{corPsipsum}
Under the assumptions of Theorem \ref{Fdecay}, for each $\ell \geq 0$ one has
\be\label{eqPsipsum}
   \sup_{x\in \T}  \sum_{\e \in \cC} \sum_{n \in \{0,\ldots,2^\ell-1\}^d} \abs{\bar\Psi^{\rm p}_{\e,\ell,n}(x)}   \leq   C 2^{-\alpha \ell} 
\ee	
with $\alpha := r - \frac{d}2$ and $C$ independent of $\ell$.
\end{cor}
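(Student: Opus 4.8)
The plan is to reduce the claimed scale-wise bound \eqref{eqPsipsum} to the pointwise decay estimate \eqref{Fepselldec} for $F_{\varepsilon,\ell}$ furnished by Theorem \ref{Fdecay}, by carefully undoing the two structural operations that produced the periodic wavelets $\bar\Psi^{\rm p}_{\varepsilon,\ell,n}$: the rescaling encoded in \eqref{PsiF}--\eqref{defF}, and the $\T$-periodization \eqref{Psiperiodization}. First I would fix $x \in \T$, $\ell \geq 0$, and $\varepsilon \in \cC$, and use the translation-invariance already noted before \eqref{PsiF} together with \eqref{PsiF} to write each $\bar\Psi_{\varepsilon,\ell,n}$ on $\R^d$ as $\bar\Psi_{\varepsilon,\ell,n}(x) = 2^{\ell(d/2-r)} F_{\varepsilon,\ell}(2^\ell x - n)$. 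Substituting this into the periodization \eqref{Psiperiodization} and taking absolute values, the left-hand sum over $\varepsilon$ and $n \in \{0,\dots,2^\ell-1\}^d$ becomes, after pulling out the prefactor $2^{\ell(d/2-r)}$,
\be
\sum_{\e \in \cC} \sum_{n \in \{0,\dots,2^\ell-1\}^d} \abs{\bar\Psi^{\rm p}_{\e,\ell,n}(x)} \leq 2^{\ell(d/2-r)} \sum_{\e\in\cC} \sum_{m\in\Z^d} \sum_{n \in \{0,\dots,2^\ell-1\}^d} \abs{F_{\varepsilon,\ell}\bigl(2^\ell(x+2\gamma m) - n\bigr)}.
\ee

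The key observation is that, as $m$ ranges over $\Z^d$ and $n$ over the fundamental block $\{0,\dots,2^\ell-1\}^d$, the shifted argument $2^\ell(x+2\gamma m) - n$ sweeps out a full shifted integer-spaced lattice: the inner double sum over $m$ and $n$ is exactly a sum of $\abs{F_{\varepsilon,\ell}}$ evaluated over the translated lattice $\{\,2^\ell x + (2\gamma\cdot 2^\ell)\,m - n : m \in \Z^d,\, n \in \{0,\dots,2^\ell-1\}^d\,\}$. Since $2\gamma\cdot 2^\ell$ is an integer multiple of $2^\ell$ and the block of $n$'s tiles the period, I would argue that this reindexes to the single lattice sum $\sum_{k \in \Z^d} \abs{F_{\varepsilon,\ell}(2^\ell x - k)}$ (one should verify that $2\gamma$ is an integer, or more carefully that the combined index set is $\Z^d$ up to the fixed offset $2^\ell x$; this bookkeeping is where I expect the only genuine subtlety to lie). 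Granting this, the problem collapses to bounding $\sum_{k\in\Z^d} \abs{F_{\varepsilon,\ell}(t-k)}$ uniformly in the offset $t := 2^\ell x$ and in $\ell, \varepsilon$.

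That uniform lattice bound is immediate from Theorem \ref{Fdecay}: since $\abs{F_{\varepsilon,\ell}(y)} \leq C(1+\abs{y})^{-M}$ with $M \geq d+1$ and $C$ independent of $\ell$ and $\varepsilon$, the sum $\sum_{k\in\Z^d}(1+\abs{t-k})^{-M}$ is bounded by a constant independent of the offset $t$, because $M > d$ guarantees absolute convergence and a standard comparison with the integral $\int_{\R^d}(1+\abs{y})^{-M}\,dy < \infty$ yields a bound uniform in $t$. Combining this with the prefactor, and noting $\#\cC = 2^d - 1$ is a fixed constant, I obtain
\be
\sup_{x\in\T} \sum_{\e\in\cC}\sum_{n} \abs{\bar\Psi^{\rm p}_{\e,\ell,n}(x)} \leq C\, 2^{\ell(d/2 - r)} = C\, 2^{-\alpha\ell}, \quad \alpha = r - \frac{d}2,
\ee
which is exactly \eqref{eqPsipsum}. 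The main obstacle is thus purely combinatorial: verifying that the doubly-indexed periodization-plus-translate family reindexes cleanly to a single shifted lattice so that the $n$-sum over one period and the $m$-sum over periods merge; once that is settled, the decay of $F_{\varepsilon,\ell}$ does all the analytic work and the geometric factor $2^{-\alpha\ell}$ falls out of the normalization $2^{\ell(d/2-r)}$ built into \eqref{PsiF}.
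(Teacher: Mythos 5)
Your overall strategy coincides with the paper's proof: undo the periodization \eqref{Psiperiodization} and the rescaling \eqref{PsiF}, collapse the resulting double sum over $(m,n)$ into a single lattice sum, and bound that lattice sum uniformly via the decay \eqref{Fepselldec} with $M\geq d+1$, so that the prefactor $2^{\ell(d/2-r)}$ gives the rate $2^{-\alpha\ell}$. However, the bookkeeping step that you yourself identify as the crux is resolved incorrectly. The translation structure noted before \eqref{PsiF} is that $\bar\Psi_{\e,\ell,n}$ is the translate of $\bar\Psi_{\e,\ell,0}$ by $2\gamma 2^{-\ell}n$, not by $2^{-\ell}n$; hence
\be
\bar\Psi_{\e,\ell,n}(x) = 2^{\ell(d/2-r)}\,F_{\e,\ell}\bigl(2^\ell x - 2\gamma n\bigr),
\ee
and after periodization the arguments that occur are $2^\ell x + 2\gamma(2^\ell m - n)$ for $m\in\Z^d$, $n\in\{0,\ldots,2^\ell-1\}^d$. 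Your formula $F_{\e,\ell}(2^\ell x - n)$ drops the factor $2\gamma$, and your proposed reindexing to the integer-spaced sum $\sum_{k\in\Z^d}\abs{F_{\e,\ell}(2^\ell x - k)}$, hedged by the remark that ``one should verify that $2\gamma$ is an integer'', cannot be justified: $\gamma$ is merely a sufficiently large real number produced by Theorem \ref{phiexistence}, and $2\gamma\in\Z$ is neither assumed nor generally true.

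The correct reindexing requires no integrality at all. The map $(m,n)\mapsto k := 2^\ell m - n$ is a bijection from $\Z^d\times\{0,\ldots,2^\ell-1\}^d$ onto $\Z^d$, by componentwise Euclidean division: every $k\in\Z^d$ is uniquely of the form $2^\ell m - n$ with $n\in\{0,\ldots,2^\ell-1\}^d$. Hence the double sum collapses exactly to
\be
\sum_{k\in\Z^d}\bigabs{F_{\e,\ell}\bigl(2^\ell x + 2\gamma k\bigr)},
\ee
which is a sum over the \emph{dilated} lattice $2\gamma\Z^d$ with offset $2^\ell x$, not over $\Z^d$. Your final analytic step then applies verbatim to this lattice: by Theorem \ref{Fdecay} and $M\geq d+1$, the sum $\sum_{k\in\Z^d}(1+\abs{t+2\gamma k})^{-M}$ is bounded uniformly in $t$ (reduce $t$ modulo $2\gamma$ so that it lies in $\T$ and compare with an integral; the constant now depends on $\gamma$, $d$, $M$), and multiplying by $\#\cC = 2^d-1$ and the prefactor $2^{\ell(d/2-r)}$ yields \eqref{eqPsipsum}. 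With this repair your argument becomes exactly the paper's proof.
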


\begin{proof}
For the summation over $n$ in \eqref{eqPsipsum}, by \eqref{Psiperiodization} and \eqref{PsiF} we obtain	
\[
\begin{aligned}
   \sum_{n \in \{0,\ldots,2^\ell-1\}^d} \abs{\bar\Psi^{\rm p}_{\e,\ell,n}(x)}
     &= 2^{\ell(d/2-r)} \sum_{m \in \Z^d} \sum_{n \in \{0,\ldots,2^\ell-1\}^d} \abs{F_{\e,\ell}\bigl(2^\ell x + 2\gamma ( 2^\ell m - n) \bigr)}   \\
     &= 2^{\ell(d/2-r)} \sum_{k \in \Z^d} \abs{F_{\e,\ell}(2^\ell x + 2\gamma k)}.
\end{aligned}
\]
By Theorem \ref{Fdecay},
\be
 \sum_{k \in \Z^d} \abs{F_{\e,\ell}(2^\ell x + 2\gamma k)} 
  \leq C \sum_{k \in \Z^d} ( 1 + \abs{2^\ell x + 2\gamma k})^{-M}
   \leq C \max_{z \in \T} \sum_{k \in \Z^d} (1 + \abs{z + 2\gamma k})^{-M},
\ee
and the expression on the right is bounded since $M \geq d+1$.
\end{proof}

\begin{remark}
In the estimate \eqref{eqPsipsum}, the precise value of $M$ enters only into the constant $C$. For numerical purposes, however, larger values of $M$ corresponding to stronger spatial localization of the functions $\bar\Psi^{\rm p}_{\e,\ell,n}$ can be advantageous. Note that in the case of the Mat\'ern covariance, if $\phi \in C^\infty$, then Lemma \ref{sqrtkestimate} can be applied for any integer $M\geq d+1$. If in addition, we have $\hat\vp, \hat\psi\in C^\infty$ for the functions generating the Meyer wavelets, then Theorem \ref{Fdecay} can be applied for any such $M$ as well, and the resulting spatial decay of $\bar\Psi^{\rm p}_{\e,\ell,n}$ is faster than any polynomial order.
\end{remark}

\section{Numerical aspects}

We now discuss in more concrete detail the periodic continuation and the resulting KL and wavelet
representations in the case of the Mat\'ern covariances \iref{matern}. In particular we 
discuss how these representations can be efficiently computed by using FFT and show some
numerical examples which reveal the effect of the parameters $\lambda$ and $\nu$ 
of the Mat\'ern covariance. While these computational principles
apply in any dimension, we work in the univariate setting, both for the sake of notational simplicity
and for visualization purposes. Our computational domain is thus an interval
$\left]-\frac\delta2,\frac\delta2\right[$ for some $\delta>0$. Note however that
varying the correlation length parameter $\lambda$ in \iref{matern} amounts to rescaling the interval. Therefore we fix $\delta=1$, that is
\be
D:=\Bigl]-\frac12,\frac12\Bigr[,
\ee
and study the effect of varying $\lambda$.

\subsection{Truncation and positivity}

We need to first choose a $\phi$ satisfying the conditions in Theorem \ref{phiexistence} such that $\kt = k \phi$ has nonnegative Fourier transform.
One option, which yields $\phi \in C^\infty(\R)$, is based on the function $\theta$ defined by 
\be
  \theta(x) := \begin{cases} \exp(-x^{-1}), & x>0, \\  0,  & x \leq 0. \end{cases}
  \label{theta}
\ee  
We then set
\be
   \phi(x) := \frac{ \theta\Bigl(\frac{\kappa - \abs{x}}{\kappa - \delta}\Bigr) }{\theta\Bigl(\frac{\kappa - \abs{x}}{\kappa - \delta}\Bigr) + 
           \theta\Bigl(\frac{\abs{x} - \delta}{\kappa - \delta}\Bigr)}.
\ee
Recall that $\kappa=2\gamma-\delta >\delta$. In view of Theorem \ref{phiexistence}, in order to ensure $\hatkt\geq 0$, it then suffices to take $\gamma$ sufficiently large.

In order to illustrate the dependence of the required value of $\gamma$ on the parameters $\nu$, $\lambda$ of the Mat\'ern covariance, we now describe, for any chosen value of $\gamma$, a simple scheme for approximating $\hatkt$ based on the discrete Fourier transform. Let $L> \kappa$ (so that $\supp \kt \subset [-L,L]$), and let $N= 2^J$ for some $J>0$. We consider the approximation of $\hatkt$ by the trapezoidal rule,
\be\label{trapezoidalapprox}
  \hatkt(\omega) = \int_{-L}^L \kt(x)\, e^{-i\omega x}\,dx \approx h \sum_{n={-N/2}}^{N/2-1} \kt(x_n) e^{-i\omega x_n} =: D_{L,N}(\omega),
\ee
where $h := 2L/N$ and $x_n := nh$.
The sum on the right side can be evaluated by the FFT to obtain the values $D_{L,N}(\omega_k)$ with
\be\label{omegakdef}
   \omega_k := \frac{\pi k}{L}, \quad k = -\frac{N}2,\ldots, \frac{N}2 - 1.
\ee
Thus, by making $N$ and $L$ large we may approximately compute $\hatkt$ 
on an arbitrarily large range of $\omega$ and with arbitrarily fine sampling rate.

Since $\kt$ agrees on $[-L,L]$ with its $2L$-periodic extension, we have
\be
  \kt(x_n) = \sum_{\ell\in\Z} \biggl(\frac1{2L} \int_{-L}^L \kt(y)\, e^{-\pi i y \ell/L} \,dy \biggr) e^{\pi i x_n \ell / L},
\ee
and consequently
\begin{equation}
	D_{L,N}(\omega_k) = h \sum_{n={-N/2}}^{N/2-1} \Bigl( \sum_{\ell\in\Z} \frac{1}{2L} \hatkt(\omega_\ell) \,e^{\pi i x_n \ell / L}  \Bigr) e^{- \pi i x_n k /L}
	 =  \sum_{m\in\Z} \hatkt(\omega_{k + m N}).
\end{equation}
We thus have the error representation
\be
  \abs{\hatkt(\omega_k) - D_{L,N}(\omega_k)}
   = \Bigabs{ \sum_{\substack{m\in\Z\\ m\neq 0}} \hatkt(\omega_{k+mN}) }.
\ee
In view of \eqref{maternfourier} and \eqref{ktdecayestimate}, in our present setting we obtain 
\be
   \abs{\hatkt(\omega_k) - D_{L,N}(\omega_k)} \leq C N^{-(2\nu + 1)}, \quad k = -\frac{N}2,\ldots, \frac{N}2 - 1,
   \label{estimkt}
\ee
where $C>0$ is independent of $L$ and $N$.

Based on this approximation, we can check positivity of the obtained approximate values $D_{L,N}(\omega_k)$ for each $\gamma$ and combine this with a simple bisection scheme to obtain an estimate of the minimum required value $\gamma_{\min}$ for which $\hatkt$ remains non-negative on the
chosen grid. As illustrated in Figure \ref{fig:gammamin}, we observe that $\gamma_{\min}$ remains close to its lower bound $\delta=1$ for $\nu,\lambda < 1$, and shows approximately bilinear growth for larger $\nu,\lambda$. In other words, the continuation process requires a significantly
larger domain as smoothness or correlation length increase. This also implies that the KL or wavelet frames
of the RKHS obtained in \S \ref{sec:kl} and \S \ref{sec:wavelets} become more redundant as these parameters increase.

\begin{figure}
		\centering
	\includegraphics[width=9cm]{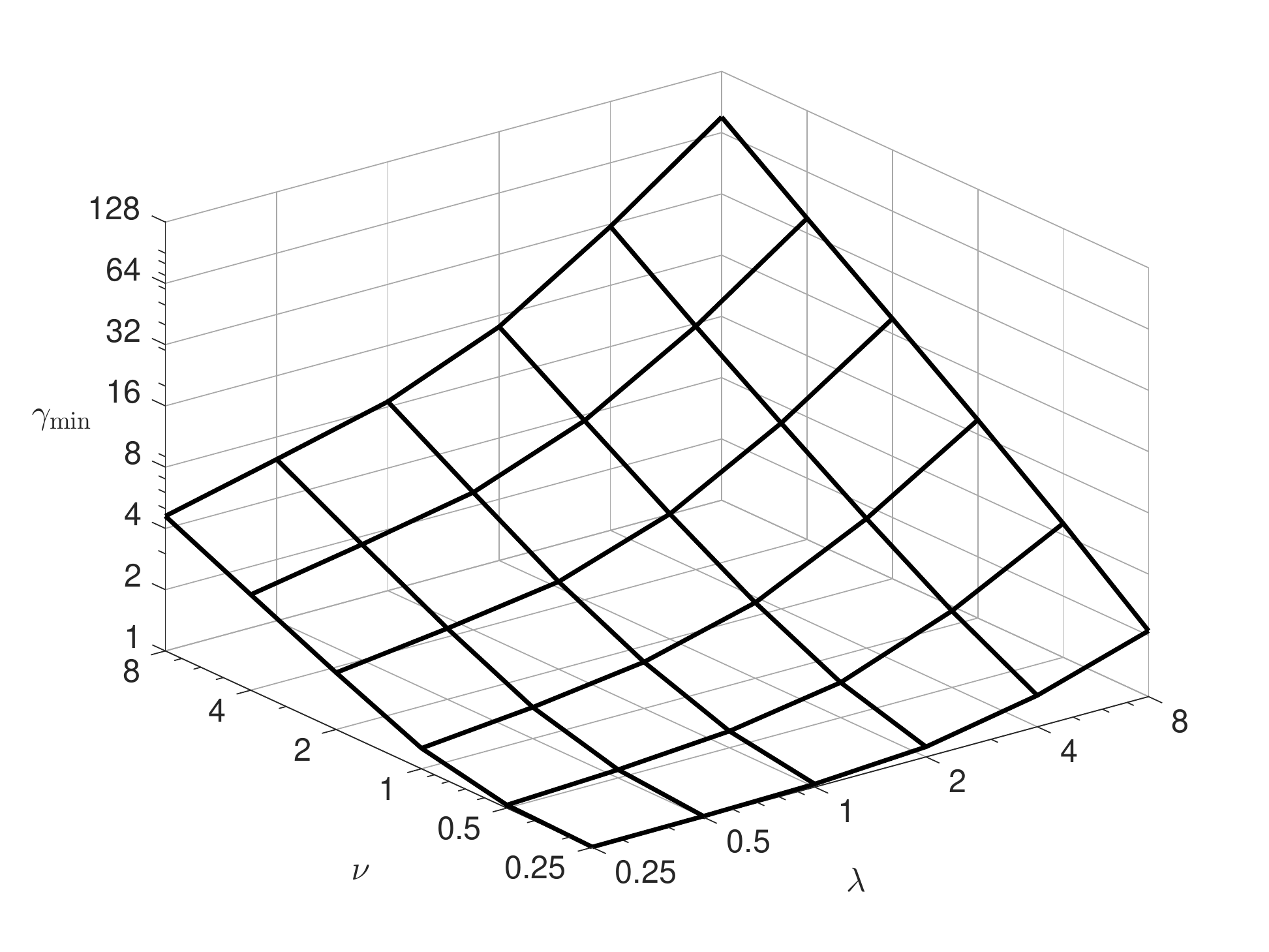}
	\caption{Numerically observed minimum value of $\gamma$ required for positivity of $\hatkt$, with $k$ as in \eqref{matern}, in dependence on the Mat\'ern parameters $\lambda,\nu$.}
	\label{fig:gammamin}
\end{figure}

\subsection{Mat\'ern wavelets}

The Meyer scaling function and wavelet can be defined by first
taking $\hat \vp$ to be a non-negative function such that
\be
|\hat \vp(\omega)|^2=\beta(\omega),
\ee
where $\beta(\omega)$ is a smooth and even function supported in $[-\frac{4\pi}3,\frac{4\pi}3]$,
such that 
\be
\beta(\omega)=1, \quad \omega\in \left[-\frac{2\pi}3, \frac{2\pi}3\right],
\ee
and
\be
\beta(\pi-\omega)+\beta(\pi+\omega)=1, \quad \omega\in \left[0, \frac\pi3 \right].
\ee
Then, one defines $\hat \psi$ by
\be
 \hat\psi(\omega) :=\bigl(\beta(\omega/2)-\beta(\omega)\bigr)^{1/2}e^{i\omega/2}.
\ee
One simple example with explicit expressions of $\hat \vp$ and $\hat \psi$, following the construction given in \cite{Dau}, is
\be\label{meyerscdef}
   \hat\varphi(\omega) := \begin{cases} 
 	 1 ,& \abs{\omega} \leq \frac{2\pi}3,\\
 	 \cos\Bigl(\frac\pi2 \nu\Bigl( \frac{3\abs{\omega}}{2\pi} - 1 \Bigr) \Bigr), & \frac{2\pi}3 < \abs{\omega} < \frac{4\pi}3,\\
 	 0, &\text{otherwise,}
 \end{cases}
\ee
and
\be
 \hat\psi(\omega) := \begin{cases}
 	  \sin\Bigl( \frac\pi2 \nu \Bigl( \frac{3\abs{\omega}}{2\pi} - 1\Bigr) \Bigr) e^{i\omega/2}, & \frac{2\pi}3 < \abs{\omega} \leq \frac{4\pi}3, \\
 	 \cos\Bigl( \frac\pi2 \nu \Bigl( \frac{3\abs{\omega}}{4\pi} - 1\Bigr) \Bigr) e^{i\omega/2}, & \frac{4\pi}3 < \abs{\omega} \leq \frac{8\pi}3, \\
 	 0, & \text{otherwise,} 
 \end{cases}
\ee
where we take
\be
 \nu(x) := \frac{\theta(x)}{\theta(x) + \theta(1-x)}
\ee
with $\theta$ given by \iref{theta}.

From these we now construct the one-dimensional versions of $\Phi^{\rm p}$ and $\Psi^{\rm p}$ described in \S\ref{sec:wavelets}. Recall that for the scaling function, $\Phi^{\rm p} = (2\gamma)^{-1/2} \sqrt{ \hatkt(0) }$, which we can directly approximate using \eqref{trapezoidalapprox}. For the wavelets, it suffices to consider $\Psi^{\rm p}_{\varepsilon,\ell,0}$ for each wavelet-type $\varepsilon$ and scale level $\ell$, 
since all further wavelets are obtained as translates of these functions. In the present univariate case, there is only a single wavelet type $\varepsilon=1$, 
and we omit the corresponding subscript in what follows.

By \eqref{gjbar} and \eqref{cnkp}, we have
\be
\Psi^{\rm p}_{\ell,0}(x) = \frac{1}{2\gamma} \sum_{n\in\Z} \sqrt{ \hatkt\left(\frac\pi\gamma n\right)} \,\Bigl( \sqrt{2^{1-\ell}\gamma}\, \hat\psi(2^{-\ell + 1}\pi n)\Bigr)\,e^{i \frac\pi\gamma n x}.
\ee
We now choose $L$ in \eqref{trapezoidalapprox} as $L=2 \gamma$, that is, $\omega_k = \frac{\pi k}{2 \gamma}$. We assume that $\gamma$ and $N = 2^J$, with $J>1$, are sufficiently large to ensure $D_{L,N}(\omega_k)\geq 0$ for the range of $k$ in \eqref{omegakdef}. This allows us to approximate the above expression by
\be
\label{approxwavelets}
  \tilde\Psi^{\rm p}_{\ell,0}(x) := \frac{1}{\sqrt{2^{\ell+1}\gamma}} \sum_{n = -N/4}^{ N/4-1} \sqrt{  D_{L,N}(\omega_{2 n})} \,\hat\psi(2^{-\ell + 1}\pi n)\,e^{i \frac\pi\gamma n x}.
\ee
Using the compact support of $\hat\psi$ and that $\abs{\hat\psi} \leq 1$, we obtain
\begin{multline}\label{psierrest}
  \abs{\Psi^{\rm p}_{\ell,0}(x) -  \tilde\Psi^{\rm p}_{\ell,0}(x)}
   \leq C 2^{-\ell/2} \biggl\{ \sum_{\substack{|n|\geq N/4 \\ \frac13 2^\ell < \abs{n} < \frac43 2^\ell }}\sqrt{ \hatkt(\pi\gamma^{-1} n)} \\
    +  \sum_{\substack{n \in\{ - N/4, \ldots,  N/4-1 \} \\  \frac13 2^\ell < \abs{n} < \frac43 2^\ell  }} \biggabs{\sqrt{ \hatkt(\pi\gamma^{-1} n)} - \sqrt{  D_{L,N}(\omega_{2 n})} }\biggr\}.
\end{multline}
Recall by \iref{framekt} we have $c (1+\abs{\omega})^{-2\nu-1}\leq \hatkt(\omega) \leq C(1+\abs{\omega})^{-2\nu-1}$.
It follows that the first sum can be bounded according to 
\be
\sum_{\substack{|n|\geq N/4 \\ \frac13 2^\ell < \abs{n} < \frac43 2^\ell }}\sqrt{ \hatkt(\pi\gamma^{-1} n)}\leq C \min\{ N^{-\nu + \frac12}, 2^\ell N^{-\nu-\frac12}\}.
\ee
For the second sum, we combine \iref{estimkt} with 
\be
 \biggabs{\sqrt{ \hatkt(\pi\gamma^{-1} n)} - \sqrt{  D_{L,N}(\omega_{2 n})} }
  \leq  \frac{\bigabs{{ \hatkt(\pi\gamma^{-1} n)} - {  D_{L,N}(\omega_{2 n})} }}{\sqrt{ \hatkt(\pi\gamma^{-1} n)}},
\ee
to obtain
\be
\sum_{\substack{n \in\{ - N/4, \ldots,  N/4-1 \} \\  \frac13 2^\ell < \abs{n} < \frac43 2^\ell  }} \biggabs{\sqrt{ \hatkt(\pi\gamma^{-1} n)} - \sqrt{  D_{L,N}(\omega_{2 n})} }
\leq CN^{-2\nu-1} \min\{ N^{\nu+\frac32}, 2^\ell N^{\nu+\frac12}\}.
\ee
This yields
\be
  \abs{\Psi^{\rm p}_{\ell,0}(x) -  \tilde\Psi^{\rm p}_{\ell,0}(x)} 
    \leq C 2^{-\ell/2}  \min\{ N^{-\nu + \frac12}, 2^\ell N^{-\nu-\frac12}\} =C  2^{-\nu J - \frac12 \abs{J-\ell}},
    \label{waveletuniferr}
    \ee
where $C>0$ depends on $\nu,\lambda,\gamma$, but not on $J$ or $\ell$.

The sum in \eqref{approxwavelets} can be evaluated by FFT simultaneously for the $2^{J-1}$ arguments
\be
 x = \frac{4\gamma k}{N}, \quad k=-\frac{N}4,\ldots, \frac{N}4-1,
\ee
at cost of order $J 2^J$. In other words, if we prescribe a grid size $h\sim 2^{-J}$, as a consequence of \eqref{waveletuniferr} we can determine the values of the wavelets 
at any level $\ell$ at the grid points up to an error of order $h^\nu$, using $h^{-1} \abs{\log h}$ operations in total. Since the wavelets are trigonometric polynomials, their values between grid points can be approximated with similar order of accuracy by local polynomial interpolation of neighboring grid values.

As an illustration, we display in Figures \ref{fig:wv1} and \ref{fig:wv2}
the obtained wavelets at scales $\ell=0,\dots,5$ for $\lambda=1$ and $(\nu,\gamma)=(1/2,3/2)$, $(\nu,\gamma)=(4,5)$.
Note that these wavelets behave asymptotically similarly to standard wavelets in terms of scale invariance.
As expected the size decay in scale depends on $\nu$.

\begin{figure}
		\centering
	\includegraphics[width=14cm]{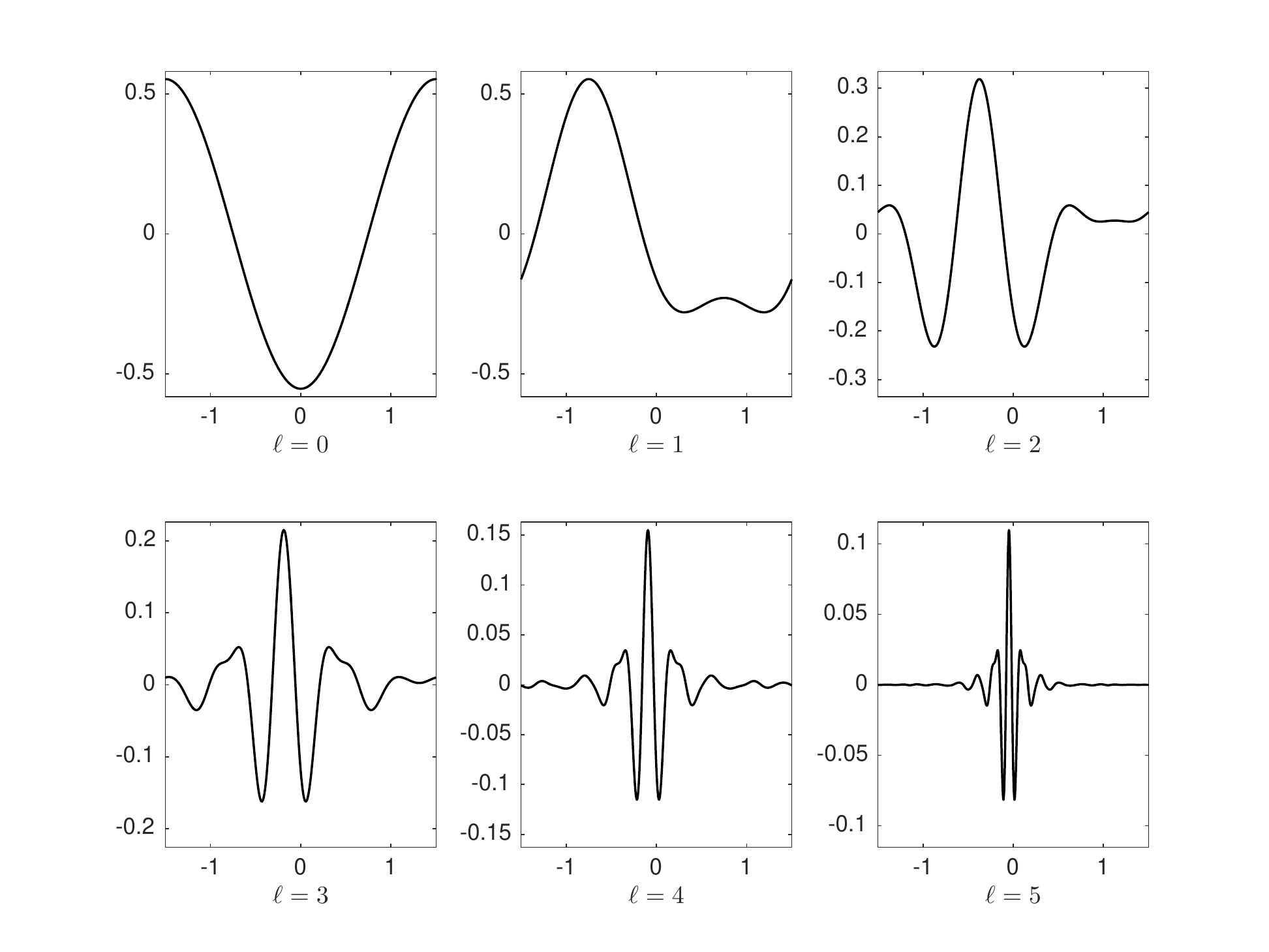}
	\caption{Wavelets $\Psi^{\rm p}_{\ell,0}$ obtained with $\lambda =1$, $\nu=\frac12$, and $\gamma=\frac32$, for $\ell=0,\ldots,5$.}
	\label{fig:wv1}
\end{figure}

\begin{figure}
		\centering
	\includegraphics[width=14cm]{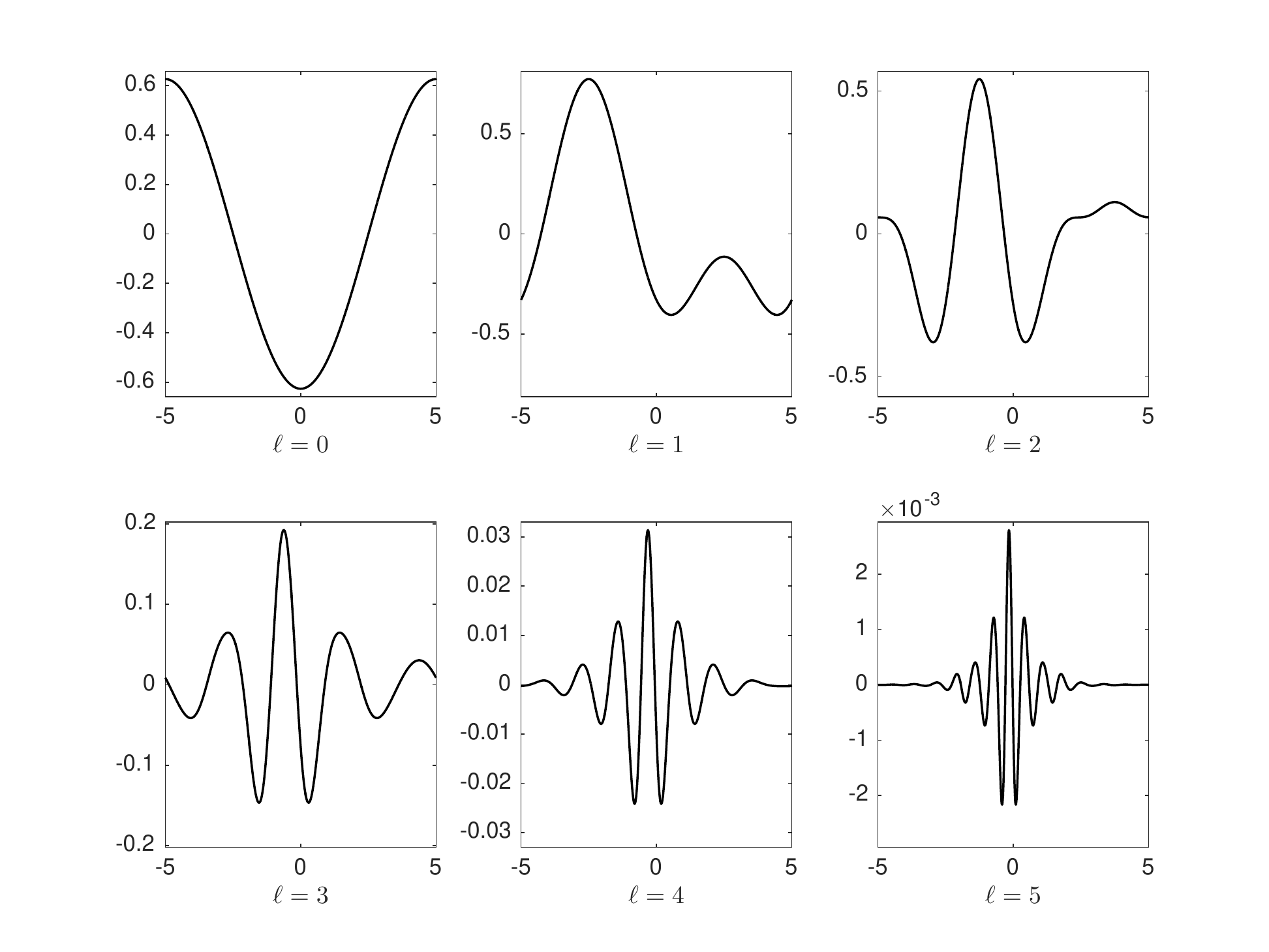}
	\caption{Wavelets $\Psi^{\rm p}_{\ell,0}$ obtained with $\lambda =1$, $\nu=4$, and $\gamma=5$, for $\ell=0,\ldots,5$.}
	\label{fig:wv2}
\end{figure}

\end{document}